\definecolor{color1}{RGB}{27,158,119}
\definecolor{color2}{RGB}{217,95,2}
\definecolor{color3}{RGB}{117,112,179}
\definecolor{color4}{RGB}{231,41,138}
\newtheorem{theorem}{Theorem}[section]
\newtheorem{main}[theorem]{Main Results}
\newtheorem{proposition}[theorem]{Proposition}
\newtheorem{lemma}[theorem]{Lemma}
\theoremstyle{remark}
\newtheorem{remark}[theorem]{Remark}
\newcommand{\N}{\mathbb{N}}
\newcommand{\R}{\mathbb{R}}
\newcommand{\C}{\mathbb{C}}
\DeclareMathOperator{\sech}{sech}
\DeclareMathOperator{\cn}{cn}
\DeclareMathOperator{\dn}{dn}
\DeclareMathOperator{\sn}{sn}
\renewcommand{\leq}{\leqslant}
\renewcommand{\geq}{\geqslant}
\DeclareMathAlphabet{\mathpzc}{OT1}{pzc}{m}{it}
\renewcommand{\Re}{\mathcal R\!\mathpzc{e}}
\renewcommand{\Im}{\mathcal I\!\mathpzc{m}}
\begin{document}

\title[Analysis of the quasi-periodic waves of NLS]{Analysis of quasi-periodic waves of cubic nonlinear Schr\"odinger equations}

\author[P.~Kfoury]{Perla Kfoury}
\address[Perla Kfoury]{Mathematics Division, School of Arts and Sciences,\newline\indent American University, P.O.Box 28282, Dubai, UAE }
\email[Perla Kfoury]{pkfoury@aud.edu}

\author[S.~Le Coz]{Stefan Le Coz}
\thanks{The work of P. K. and S. L. C. is 
  partially supported by ANR-11-LABX-0040-CIMI and the ANR project NQG ANR-23-CE40-0005}
\address[Stefan Le Coz]{
  \newline\indent
  Universit\'e de Toulouse ; INSA ; CNRS; IMT
  \newline\indent
  UPS IMT, F-31062 Toulouse Cedex 9,
  \newline\indent
  France}
\email[Stefan Le Coz]{slecoz@math.univ-toulouse.fr}

\author[T.-P.~Tsai]{Tai-Peng Tsai}
\thanks{The work of T.-P. T. is partially supported by the NSERC grant RGPIN-2023-04534.}
\address[Tai-Peng Tsai]{
Department of Mathematics,
\newline\indent
University of British Columbia,
\newline\indent
Vancouver BC
\newline\indent
Canada V6T 1Z2}
\email[Tai-Peng Tsai]{ttsai@math.ubc.ca}

\subjclass[2010]{35Q55 (35B10, 35A15)}

\date{October 17, 2025}
\keywords{nonlinear Schr\"odinger equation, standing waves, normalized gradient flow, variational method, quasi-periodic solutions}

\begin{abstract}
  We study the quasi-periodic standing wave solutions of the focusing and defocusing cubic nonlinear Schr\"odinger equations in dimension one. In the defocusing case, we establish  a diffeomorphic correspondence between the invariants of the ordinary differential equation of the wave profiles and the conserved quantities of the evolution equation. We introduce a numerical scheme to compute the minimizers of the energy at fixed mass and momentum for both focusing and defocusing cases. The scheme is based on a gradient flow approach with discrete renormalization at each time step.  The novelty of our scheme is that the renormalization step deals at the same time with the mass and the momentum constraints. In numerical experiments, we observe that a given solution of the profile ordinary differential equation is also a minimizer of the energy at corresponding mass and momentum. 
\end{abstract}
\maketitle

\section{Introduction}
We consider the one dimensional cubic nonlinear Schr\"odinger equation
\begin{equation} \label{eq:nls}
i \psi _t + \psi _{xx} + b | \psi |^2 \psi =0,
\end{equation}
where $ \psi : \R_t \times \R_x \rightarrow \C $ and $b \in \R \setminus \{0\}$. Equation~\eqref{eq:nls} appears in many areas of physics such as quantum mechanics, optics, and water waves, and serves as a model for nonlinear dispersive wave phenomena, more generally see \cite{Fi15, SuSu99}. It is said to be focusing if $b>0$, where the nonlinearity is attractive and defocusing if $ b<0$, where the nonlinearity is repulsive.

We are particularly interested in the spatially quasi-periodic solutions. They are solutions of~\eqref{eq:nls} for which there exist a period $T>0$ and a Floquet multiplier $\theta\in \R$ such that for any $x\in\mathbb R$  we have 
  \begin{equation}
\psi(\cdot,x+T)=e^{i\theta}\psi(\cdot ,x).\label{eq:quasi}
\end{equation}
Note that $\theta$ is the increment of the phase over a period.
Observe that the space of quasi-periodic functions is left invariant by~\eqref{eq:nls}, i.e.~if the initial data verifies~\eqref{eq:quasi}, then the corresponding solution of~\eqref{eq:nls} (if it exists) also verifies~\eqref{eq:quasi} for the same parameters. 
Given a Floquet multiplier $\theta\in\mathbb R$, we define the corresponding space of quasi-periodic locally $H^1$ functions by
\begin{equation*}
H^{\theta}_T=\{f \in H^1_{loc}(\R):  f(x+T)=e^{i \theta} f(x), \forall x \in \R \}.
\end{equation*}
When the Floquet multiplier is $0$ we are in the case of periodic waves ($f(x+T)= f(x)$) and when it is $\pi$ we are in the case of anti-periodic solutions (i.e.~$f(x+T)=- f(x)$).

At least formally, the flow of~\eqref{eq:nls} in the quasi-periodic spaces $H^{\theta}_T$ conserves the mass $M$, the momentum $P$, and the energy $\mathcal{E}$, which are defined by
\begin{equation*}
M(\psi )= \frac{1}{2}\int_0^T | \psi|^2 dx, \quad 
P(\psi)= \frac{1}{2} \Im \int_0^T \psi \overline{\psi}_x dx,
\quad
\mathcal{E}(\psi)= \frac{1}{2} \int_0^T | \psi_x|^2 dx-\frac{b}{4} \int_0^T | \psi|^4 dx.
\end{equation*}
The simplest non-trivial solutions of~\eqref{eq:nls} are the standing waves, which have the form
\begin{equation*}
\psi (t,x)= e^{-iat} u(x), \quad a \in \R.
\end{equation*}
The profile function $u$ satisfies the ordinary differential equation
\begin{equation} \label{eq:ode}
u_{xx}+au+b |u|^2 u=0.
\end{equation}
This ordinary differential equation might be interpreted as a first order Hamiltonian system for the (complex-valued) variables $(u,u_x)$. 
The conserved quantities of~\eqref{eq:ode} on $\C$ are the angular momentum $J$ and the ordinary differential equation energy $E$, defined by
\begin{equation}
\label{eq:def-J-E}
J= \Im (u \overline{u}_x),\quad E= \frac{1}{2} |u_x |^2 + \frac{a}{2}
|u|^2 + \frac{b}{4} |u|^4 .
\end{equation}
These two quantities might be inferred from Noether's principle or
simply obtained by hand (see Section~\ref{sec:ode}). It is well known (see e.g.~\cite{GaHa07-2,GaHa07-1} and the references therein) that the bounded complex valued solutions of the ordinary differential equation~\eqref{eq:ode} can be classified depending on the values of $J$ and $E$ inside a set of admissible parameters (see Section~\ref{sec:ode} for details). It is also well known that bounded, non-constant, real-valued solutions of~\eqref{eq:ode} will be periodic or anti-periodic functions given by Jacobi elliptic functions: dnoidal ($\dn$), cnoidal ($\cn$) and snoidal ($\sn$), see Section~\ref{sec:Jacobi-func} for details.

A variational characterization of real-valued solutions to~\eqref{eq:ode} was established in \cite{GuLeTs17} (see also \cite{KfLe24} for general nonlinearities). The dnoidal, cnoidal and snoidal Jacobi elliptic functions were recovered as solutions of a minimization problem for the energy at fixed mass in periodic or anti-periodic function spaces with prescribed values of the period and mass.

In this article, we will consider the following minimization problem:
\begin{equation} \label{eq:minim-problem}
  \min \{ \mathcal{E}(u): u \in H^{\theta}_T,\, M(u)=m, P(u)=p\},\quad m>0,\,p\in\mathbb R.
\end{equation}
Our main goal is to characterize variationally the complex-valued solutions of the ordinary differential equation~\eqref{eq:ode} by  identifying them as the minimizers of the minimization problem~\eqref{eq:minim-problem} for a suitable choice of parameters, to be defined rigorously later. This will be achieved using a mixture of analytical and numerical methods. Our main results can be summarized as follows.

\begin{main}
  \label{main}
  Let $b\in \mathbb R\setminus\{0\}$, $a\in\mathbb R$ and $(J,E)\in \R^2$ be in the admissible domain for the existence of a quasi-periodic solution to~\eqref{eq:ode}. Then there exist $T>0$, $\theta\in[0,2\pi)$, $m>0$, $p\in\R$ such that the minimization problem~\eqref{eq:minim-problem} has a numerical solution which is the solution of~\eqref{eq:ode} with parameters $(J,E)$. If $b<0$, then the correspondence $(J,E)\to(m,p)$ is  a diffeomorphism.
\end{main}

We now describe the main ideas and methods.

 Since our goal is to establish a link between the solutions of an ordinary differential equation and the minimizers of a variational problem, we first connect the invariants of the ordinary differential equation with the parameters of the variational problem. In particular, in the defocusing case, we establish a diffeomorphic correspondence between the conserved quantities of the ordinary differential equation~\eqref{eq:ode} and those of the nonlinear Schrödinger equation~\eqref{eq:nls}. Our approach relies in part on the work of Gallay and Haragus \cite{GaHa07-2}.

The second step is to connect the minimizers with the solutions of the equation. To this aim, we develop a numerical scheme for the computations of the minimizers. Our scheme is  a gradient-flow method with discrete normalization. It is used to find the minimizer of the energy $\mathcal{E}$ with fixed mass $m>0$ and fixed momentum $p\in\mathbb R$. At each time step, we evolve in the direction of the energy gradient and then renormalize the mass and momentum of the resulting function. Similar schemes (for single-constraint problems) are well known in the physics literature under the name “imaginary time method”. When there is no momentum and only real-valued functions are considered, such an approach to compute minimizers was developed by Bao and Du \cite{BaDu04}. For the nonlinear Schrödinger equation on the line $\R$ with focusing cubic nonlinearity, Faou and Jezequel \cite{FaJe18} provided a theoretical analysis of the various levels of discretization of this method, from the continuous formulation to the fully discrete scheme. To characterize variationally the Jacobi elliptic functions, Gustafson, Le Coz, and Tsai \cite{GuLeTs17} developed a numerical method to obtain minimizers of the energy with fixed mass $m>0$ and zero momentum. A related study was carried out by Besse, Duboscq, and Le Coz \cite{BeDuLe20} for the gradient flow on nonlinear quantum graphs. The main originality of our approach is that we are able to handle renormalization on two constraints instead of one. This is achieved by the introduction of a suitable auxiliary evolution problem. The validity of our approach is supported by numerical experiments. For various values of $(J,E)$ we compute the solutions of the ordinary differential equation together with their period, Floquet multiplier, mass, and momentum. We then run the normalized gradient-flow algorithm with these parameters and observe that the computed minimizer coincides, up to numerical errors, with the corresponding solution of the ordinary differential equation.

The rest of the paper is organized as follows.

In Section~\ref{sec:ode-min}, we start from the ordinary differential equation~\eqref{eq:ode} and examine the links between its solutions and those of the minimization problem~\eqref{eq:minim-problem}. We begin in Section~\ref{sec:Jacobi-func} by reviewing the well-known properties of Jacobi elliptic functions. In Sections~\ref{sec:analysis-of-the-ode} and~\ref{sec:domains}, we analyze the existence regions for the ordinary differential equation~\eqref{eq:ode} and recall the general form of its solutions. In Section~\ref{sec:period}, we provide a formula for the period function. In Section~\ref{sec:Mass-Momentum}, we derive the expressions of the mass $M$ and the momentum $P$ in terms of $J$ and $E$, both within the admissible domain for $(J,E)$ and on its boundaries. In Section~\ref{sec:diffeo}, we show that the mapping  $(J,E)\mapsto(\tilde M(J,E),\tilde P(J,E))$ is a diffeomorphism in the defocusing case.

In Section~\ref{sec:min-ode}, we then turn to the solutions of the minimization problem~\eqref{eq:minim-problem} and study the links between its minimizers and the solutions of the ordinary differential equation~\eqref{eq:ode}. We begin Section~\ref{sec:variational-problems} by recalling the results of \cite{GuLeTs17} concerning global variational characterizations of elliptic-function periodic waves as constrained-mass energy minimizers among periodic functions and suitable subspaces of such functions. In Section~\ref{sec:ode}, we explain how we obtain numerically the solutions of the ordinary differential equation~\eqref{eq:ode}. In Section~\ref{sec:gradient-flow}, we present the gradient-flow method with discrete normalization, developed to tackle minimization problems involving simultaneous renormalization of the mass and the momentum. The space-time discretization is presented in Section~\ref{sec:discretization}. Finally, in Section~\ref{subsec:experiments}, we present several numerical experiments based on this scheme.

\section{From the ordinary differential equation to the minimization problem} \label{sec:ode-min}
\subsection{Jacobi elliptic functions} \label{sec:Jacobi-func}
The Jacobi elliptic functions are standard forms of elliptic functions. We give in this section some definitions and a few relevant properties and refer to e.g.~\cite{La89} for more information. The three basic functions are denoted $\cn(x,k)$, $\dn(x,k)$, and $\sn(x,k)$, where $k \in (0,1)$ is known as the elliptic modulus.

The incomplete elliptic integral of the first kind is defined by
\begin{equation*}
x=\mathbf{F}(\phi,k):=\int_0^\phi\frac{d\theta}{\sqrt{1-k^2\sin^2(\theta)}},
\end{equation*}
where $\phi$ is called the Jacobi amplitude, and the Jacobi elliptic functions are defined through the inverse of $\mathbf{F}(.,k)$:
\begin{equation*}
\sn(x,k):=\sin (\phi), \quad \cn (x,k):=\cos(\phi), \quad \dn(x,k):=\sqrt{1-k^2\sin^2(\phi)}.
\end{equation*}
The relations 
\begin{equation*}
1=\sn^2+\cn^2=k^2\sn^2+\dn^2
\end{equation*}
follow. The period of the elliptic functions can be expressed in terms of the complete elliptic integral of the first kind
\begin{equation*}
\mathbf  K(k):=\mathbf  F \left( \frac{\pi}{2},k \right), \quad \mathbf K(k) \rightarrow \left\{
\begin{array}{ll}
\frac{\pi}{2}, & k \rightarrow 0, \\
\infty, & k \rightarrow 1.
\end{array}
\right.
\end{equation*}
The functions $\cn$ and $\sn$ have fundamental period $4\mathbf K$, while the function $\dn$ has a fundamental period of $2\mathbf K$.

For $k \in (0,1)$, the incomplete elliptic integral of the second kind is defined by:
\begin{equation*}
\mathbf{E}(\phi,k):=\int_0^\phi \sqrt{1-k^2 \sin^2(\theta)}d\theta.
\end{equation*}
The complete elliptic integral of the second kind is defined by
\begin{equation*}
\mathbf{E}(k):=\mathbf{E} \left(\frac{\pi}{2},k \right), \quad \mathbf{E}(0)= \frac{\pi}{2}, \quad \mathbf{E}(1)=1.
\end{equation*}

The derivatives of elliptic functions (with respect to $x$) can be expressed in terms of elliptic functions. For fixed $k \in (0,1)$, we have:
\begin{equation*}
\partial_x \sn=\cn.\dn, \quad \partial_x \cn= -\sn.\dn, \quad \partial_x \dn=-k^2\cn.\sn.
\end{equation*}
We can easily verify that $\sn$, $\cn$ and $\dn$ are solutions of~\eqref{eq:ode}
with specific values of the coefficients $a,b \in \R$ given by:
\begin{equation*}
a=1+k^2, \quad b=-2k^2, \quad \text{for } u=\sn,
\end{equation*}
\begin{equation*}
a=1-2k^2, \quad b=2k^2, \quad \text{for } u=\cn,
\end{equation*}
\begin{equation*}
a=-(2-k^2), \quad b=2, \quad \text{for } u=\dn.
\end{equation*}

\subsection{Analysis of the profile equation}
\label{sec:analysis-of-the-ode}

In this section, we study the bounded solutions of the ordinary differential equation~\eqref{eq:ode}. We will denote by $u$ a solution of~\eqref{eq:ode} with parameters $J$ and $E$.
Recall that the invariants $J$ and $E$ defined in~\eqref{eq:def-J-E} may be obtained in the following way. 
To find the angular momentum $J$, we multiply~\eqref{eq:ode} by $ \overline{u} $, and we take the imaginary part to obtain $ \Im ( u_{xx} \overline{u})=0$. Since $ \partial_x (u_x \overline{u} )= u_{xx} \overline{u}+ |u_x|^2 $, we have $\partial_x ( \Im ( u_{x} \overline{u}))=0 $.
Therefore there exists $J \in \R$ such that $ \Im ( u \overline{u}_{x}) \equiv J$. To find the energy $E$ we multiply~\eqref{eq:ode} by $ \overline{u}_x $ and we take the real part.

We distinguish between two different cases depending on whether or not $J = 0$ and in each case we study the defocusing and focusing cases. The analysis presented here gathers elements already present in earlier works such as \cite{GaHa07-2,GuLeTs17}. 

\subsubsection{The case  $J \neq 0$}
We start with the case $J \neq 0  $. By definition of $J$, this implies in particular that $u(x) \neq 0$ for all $x \in \R $. Hence we can introduce the polar coordinates $u(x)= r(x) e^{i \phi (x)} $, with $r>0$ and $r , \phi \in C^2(\R)$. Since $u$ verifies~\eqref{eq:ode}, $(r,\phi)$ verifies
\begin{equation*}
\begin{cases}
  2\phi_xr_x+\phi_{xx}r=0,\\
  r_{xx}-\phi_x^2r+ar+br^3=0.
\end{cases}
\end{equation*}
Moreover $J= -r^2\phi _x \neq 0$ and this  implies that $\phi_x=-J/r^2 \neq 0$. Hence $u(x) \in \C$ with a non-trivial phase. Moreover, the system verified by $(r,\phi)$ can be rewritten as
\begin{equation*}
  \begin{cases}
  -\phi_xr^2=J,\\
  r_{xx}-\frac{J^2}{r^3}+ar+br^3=0.
\end{cases}
  \end{equation*}
The energy becomes
\begin{equation}
  \label{eq:def-E-V_J}
E=\frac{r_x^2}{2}+V_J(r),\quad V_J(r) = \frac{J^2}{2r^2} + a\frac{r^2}{2}+b \frac{r^4}{4}.
\end{equation}
We will call $V_J(r)$ the potential.

We start with the defocusing case, i.e.~we assume $b<0$. We will distinguish between three different cases depending on the values of $J$. The first case is when $ J^2 > \frac{4}{27}\frac{a^3}{b^2}$.  Then $V'_J(r) <0 $ for all $ r>0$, hence~\eqref{eq:ode} has no bounded solution in this case.

The second case is when $0< J^2 <  \frac{4}{27}\frac{a^3}{b^2}$ (which implies in particular $a>0$) then we can parametrize $J$  as
\begin{equation*}
J=q \frac{(q^2-a)}{b} = Q \frac{(Q^2-a)}{b} , \quad  \text{where } 0<q^2 < \frac{a}{3}< Q^2 < a ,
\end{equation*}
and $q$ and $Q$ have the same sign as $J$.
With this parametrization $V_J(r) $ has a unique local minimum at $r_Q=\sqrt{\frac{Q^2-a}{b}}$ and a unique local maximum at $ r_q= \sqrt{\frac{q^2-a}{b}}$. We define:
\begin{equation}
  \label{eq:def_E_-}
E_{-} (J)= V_J( r_Q)=\frac{1}{4b} (Q^2-a)(3Q^2+a), \quad E_{+} (J)= V_J(r_q)=\frac{1}{4b}(q^2-a) (3q^2+a).
\end{equation}
The curves $E_+$ and $E_-$ delimit the region of the $(J,E)$ plane where there exist bounded solutions to~\eqref{eq:ode} (see Figure~\ref{fig:E-as-function-of-J}). We have the following description.
\begin{itemize}
\item If $E= E_- (J) $ then $u$ is a plane wave, i.e.~$u(x)= r_Q e^{-iQx} $ up to phase shift.
\item If $ E=E_+ (J) $ then $u$ is a plane wave, i.e.~$u(x)=r_q e^{-iqx} $ up to phase shift, or $|u|$ is a solution homoclinic to $r_q$
  (see \cite{GaHa07-2} for the explicit formula).
\item If $E_-(J) < E < E_+ (J) $ the modulus $ r=|u|$ and the phase derivative $\phi _ x $ are periodic with the same period. If we denote by $r_1<r_2<r_3$ the three positive roots of $E- V_J(r)$, the (minimal) period is 
    \begin{equation}
\label{eq:period-T}
T(J,E)= 2 \int_{r_1} ^{r_2}\frac{{\rm d}{r}}{\sqrt{2(E-V_J(r)})}.
\end{equation}
    \end{itemize}
Finally the last case is when $J^2= \frac{4}{27}\frac{a^3}{b^2}$. In this case, the  potential $V_J(r)$ is strictly decreasing over $ \R_{+} $ with an inflection point at $r= \sqrt{\frac{-2a}{3b}}$. Since $u$ is bounded we have $E_{-}(J)=E_{+}(J) = \frac{-a^2}{3b}$, hence $u(x)= r e^{-iqx}$ (up to phase shift) with $q=\sqrt{a/3}$\,\,sign\,$J$.

\medskip

Now for the focusing case $b>0$ with $a>0$ ($a\leq 0$ respectively): the potential $V_J(r)$ is strictly convex for any $J \in \R$, and we can parametrize $J$ in a unique way as $ J=Q \frac{(Q^2-a)}{b}$, where $Q \in \R $, $Q^2 \geq a$, %
 ($Q \in \R$ respectively). Then $ V_J(r)$ has a unique critical point at $ r_Q=\sqrt{\frac{Q^2-a}{b}}$, where $V_J$ attains its global minimum. Consider $E_-(J)$ defined by the same expression as in~\eqref{eq:def_E_-}.
As in the defocusing case, we have the following description for solutions.
\begin{itemize}
\item If $E= E_- (J) $ then $u$ is a plane wave, i.e.~$u(x)= r_Q e^{-iQx} $ up to phase shift.
\item If $E_-(J) < E $ the modulus $ r=|u|$ and the phase derivative $\phi _ x $ are periodic with the same period. If we denote by $r_1<r_2$ the two positive roots of $E- V_J(r)$, the (minimal) period is also given by expression~\eqref{eq:period-T}. 
\end{itemize}

\subsubsection{The case $J=0$}
In the case $J=0$, $u(x) \in \R$ up to a constant phase, so we consider real-valued solutions, which we describe up to translation and sign change. 
The only real-valued, bounded solutions of~\eqref{eq:ode} are the elliptic functions, which are periodic (expect in limit cases). We have the following results (see e.g.~\cite{GuLeTs17}).

The  energy is given by
\begin{equation*}
E = \frac{u_x^2}{2}+V_0(u),\quad V_0(u)=a\frac{u^2}{2}+b\frac{u^4}{4}.
\end{equation*}
We start with the defocusing case $b<0$. Bounded solutions exist if and only if $a>0$, for $0\leq E\leq -\frac{a^2}{4b}$.
\begin{itemize}
\item When $E=0$, the only solution is the constant $0$.
  \item When
  $E=-\frac{a^2}{4b}$, the solutions are either the constant
  $\sqrt{\frac{-a}{b}}$ or the heteroclinic orbit
  $u(x)=\sqrt{\frac{-a}{b}}\tanh\left(x\sqrt{\frac a2}\right)$.
  \item When
    $0< E< -\frac{a^2}{4b}$, the solutions are of the form
    \begin{equation}
\label{eq:2}
u(x)=\frac{1}{\alpha} \sn \left(\mathbf{K}(k)+\frac{x}{\beta},k\right),
    \end{equation}
    where $k\in(0,1)$ and $b\beta^2=-2k^2\alpha^2$.
    \end{itemize}

    We now consider the focusing  case $b>0$.  Bounded solutions exist for any $a\in\R$ if  and only if $E\geq0$ when $a\geq0$ or $E\geq-\frac{a^2}{4b}$ when $a<0$. We first consider the case $a\geq0$.
    \begin{itemize}
    \item When $E=0$, the only solution is the constant $0$.
    \item When $E>0$, the solutions are of the form
\begin{equation}
  \label{eq:3}
u(x)=\frac{1}{\alpha} \cn \left(\frac{x}{\beta},k\right),
\end{equation}
where $k\in\left(0,\frac{1}{\sqrt{2}}\right]$ and  $b\beta^2=2k^2\alpha^2$. 
\end{itemize}
We now consider the case $a<0$.
    \begin{itemize}
    \item When $E=-\frac{a^2}{4b}$, the only solution is the constant $\sqrt{\frac{-a}{b}}$.
    \item When $-\frac{a^2}{4b}<E<0$, the  solutions are of the form
\begin{equation}
  \label{eq:4}
u(x)=\frac{1}{\alpha} \dn \left(\frac{x}{\beta},k\right),
\end{equation}
where $k\in\left(\frac{1}{\sqrt{2}},1\right)$ and $ b\beta^2=2\alpha^2$.
\item When $E=0$, the  solution is either $0$ or the homoclinic orbit
\(
\sqrt{\frac{-2a}{b}}\sech\left(x\sqrt{a}\right).
\)
    \item When $E>0$, the solutions are of the form
\begin{equation}
  \label{eq:5}
u(x)=\frac{1}{\alpha} \cn \left(\frac{x}{\beta},k\right),
\end{equation}
where $k\in(0,1)$ and  $b\beta^2=2k^2\alpha^2$. 
\end{itemize}

\subsection{Existence domains}
\label{sec:domains}

The various domains on which~\eqref{eq:ode} admits a solution for given parameters  $(J,E)$ are represented in Figure~\ref{fig:E-as-function-of-J}. %
From left to right we chose $(b=-1, a=1)$, $(b=1, a=1)$ and $(b=1, a=-1)$.

\begin{figure}[htpb!]
\centering
\includegraphics[width=.33\textwidth]{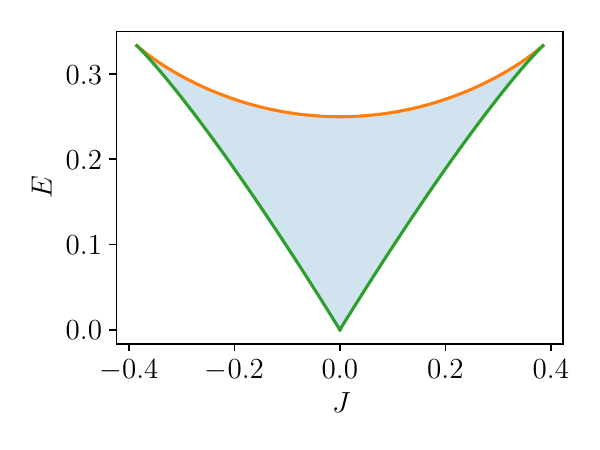}~
\includegraphics[width=.33\textwidth]{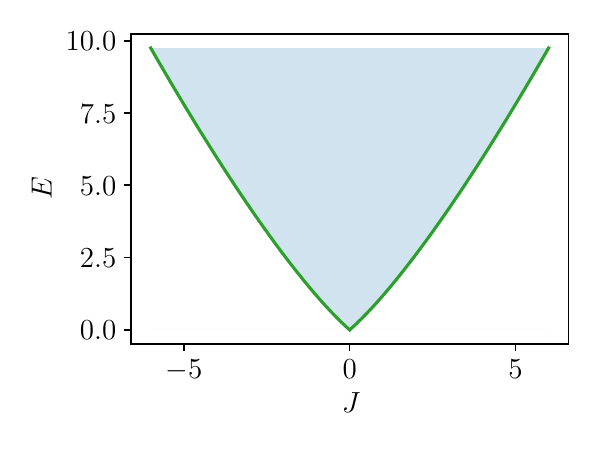}~
\includegraphics[width=.33\textwidth]{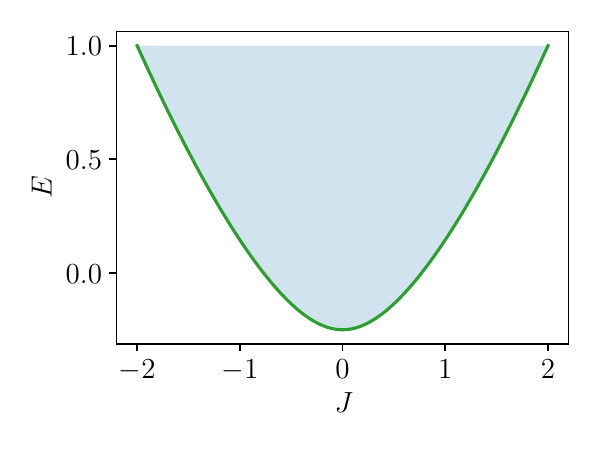}
\caption{Admissible domains for $(J,E)$. From left to right $(b,a)=(-1, 1)$, $(1, 1)$ and $(1, -1)$. The green lines are the graphs of $E_-$ and the orange line the graph of $E_+$.}
\label{fig:E-as-function-of-J}
\end{figure}

Given $u$ solution of~\eqref{eq:ode}, the function given by $\bar{u}(x)$ is also solution  of~\eqref{eq:ode}. Therefore, without loss of generality, we will always consider that
\[
  J\geq 0.
  \]
  We define the domain of existence of nontrivial quasi-periodic solutions of~\eqref{eq:ode} for each case as the following:
In the defocusing case $b<0$, let
\begin{equation} \label{eq:domain-D-defocusing-case}
D_1= \left\{ (J,E) \in \R ^2: 0<J < \sqrt{\frac{4}{27}\frac{a^3}{b^2}},\  E_{-} (J) < E < E_{+}(J) \right\}.
\end{equation}
In the focusing case $b>0$ with $a\geq0$, let 
\begin{equation} \label{eq:domain-D-focusing-case-a=1}
D_2=\left\{ (J,E) \in \R^2:J>0, E> E_{-}(J)  \right\}.
\end{equation}
In the focusing case $b>0$ with $a<0$, let 
\begin{equation} \label{eq:domain-D-focusing-case-a=-1}
D_3=\left\{ (J,E) \in \R^2: J>0, E> E_{-}(J)  \right\}.
\end{equation}
Observe that the boundaries are not included in the definition of the domains $D_1$, $D_2$, $D_3$, even though (as we described earlier) bounded solutions exist up to the boundary. The definitions of $D_2$ and $D_3$ are similar, the difference between the two stems from the function $E_{-}$, which has a different graph depending on the sign of $a$. 

\subsection{The period function}
\label{sec:period}

We describe in the following lemma the period function associated to a solution of~\eqref{eq:ode}.

\begin{lemma} \label{lem:period}
  Let $b<0$, $a>0$ ($b>0$, $a\geq0$, or $b>0$, $a<0$ respectively). Assume that $(J,E) \in D_1$ ($D_2, D_3$ respectively). Let $u$ be a solution of~\eqref{eq:ode} corresponding to $(J,E)$ and denote by $T(J,E)$ the fundamental period of $u$, which is defined in~\eqref{eq:period-T}. We denote by  $ 0 < y_1 < y_2 < y_3$, ($y_3<0 < y_1 <y_2$, $y_3 < 0 < y_1 <y_2$ respectively), the roots of the cubic polynomial $ \Pi(y)= -b y^3 -2 a y^2 + 4Ey -2J^2$. Then
\begin{equation} \label{eq:period-change-of-variable}
T(J,E)= \sqrt{2} \int_{y_1}^{y_2} \frac{dy}{\sqrt{-b(y-y_1)(y-y_2)(y-y_3)}}= 2 \sqrt{2} \int_{0}^{\frac{\pi}{2}} \frac{d\phi}{\sqrt{b(S(\phi)-y_3)}},
\end{equation}
where $ S(\phi)= y_1 \cos^2 \phi + y_2 \sin^2 \phi$.
\end{lemma}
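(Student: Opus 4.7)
The plan is to start from the definition $T(J,E)=2\int_{r_1}^{r_2} dr/\sqrt{2(E-V_J(r))}$ in~\eqref{eq:period-T} and perform the substitution $y=r^2$, using that $V_J$ in~\eqref{eq:def-E-V_J} is a polynomial in $r^2$. A direct computation gives
\begin{equation*}
4r^2\bigl(E-V_J(r)\bigr) = -br^6 - 2ar^4 + 4Er^2 - 2J^2 = \Pi(r^2),
\end{equation*}
and since $dr = dy/(2\sqrt{y})$, the factors of $\sqrt{y}$ cancel cleanly, producing
\begin{equation*}
T(J,E) = \sqrt{2}\int_{y_1}^{y_2} \frac{dy}{\sqrt{\Pi(y)}}.
\end{equation*}

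The second task is to identify the $y_i$ with the roots of $\Pi$ and to control signs. Since $\Pi(r^2)$ vanishes precisely when $E-V_J(r)=0$, the squares $r_i^2$ are roots of $\Pi$, giving $y_i=r_i^2$ in the defocusing case. In the focusing case there are only two positive roots $r_1<r_2$, and Vieta's relation $y_1y_2y_3 = -2J^2/b$ is negative, forcing the third root $y_3$ to be negative and yielding $y_3<0<y_1<y_2$ as claimed. Since the leading coefficient of $\Pi$ is $-b$, we have $\Pi(y) = -b(y-y_1)(y-y_2)(y-y_3)$, which is the first equality. A sign check on $(y_1,y_2)$ then shows that $(y-y_1)(y-y_2)<0$ and that the product with $-b(y-y_3)$ is positive in both cases, so the integrand is real.

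The second equality follows from the trigonometric change of variable $y=S(\phi)$, which maps $[0,\pi/2]$ bijectively onto $[y_1,y_2]$. One computes
\begin{equation*}
y-y_1 = (y_2-y_1)\sin^2\phi,\quad y-y_2 = -(y_2-y_1)\cos^2\phi,\quad dy = 2(y_2-y_1)\sin\phi\cos\phi\, d\phi,
\end{equation*}
so that $-(y-y_1)(y-y_2) = (y_2-y_1)^2\sin^2\phi\cos^2\phi$ and the common factor $(y_2-y_1)\sin\phi\cos\phi$ cancels between $dy$ and the square root. The remaining piece $-b(y-y_3)$ rewrites as $b(y_3-y)$ when pulled through the sign, so the integrand becomes $2/\sqrt{b(S(\phi)-y_3)}$, yielding the factor $2\sqrt 2$ out front.

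The computation is mostly mechanical; the only delicate point is the bookkeeping of signs (verifying $b(S(\phi)-y_3)>0$ on $[0,\pi/2]$ and $-b(y-y_1)(y-y_2)(y-y_3)>0$ on $(y_1,y_2)$ in each of the three regimes $D_1, D_2, D_3$), which is precisely what the root-structure analysis of Section~\ref{sec:analysis-of-the-ode} has already set up.
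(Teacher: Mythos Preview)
Your proof is correct and follows essentially the same route as the paper's: the identity $\Pi(r^2)=4r^2(E-V_J(r))$ together with the substitution $y=r^2$ gives the first equality, and the trigonometric change of variable $y=S(\phi)$ (for which the paper records the equivalent form $dy=2\sqrt{(y-y_1)(y_2-y)}\,d\phi$) gives the second. Your write-up is simply a more detailed version of the paper's three-line argument, with the sign bookkeeping and the Vieta observation for the location of $y_3$ in the focusing cases made explicit.
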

\begin{proof}
  Notice that $\Pi(r^2)=4 r^2(E-V_J(r))$, therefore $\Pi(y)$ has three roots whenever $(J,E)\in D_1 $ ($D_2, D_3$ respectively), with $y_{1,2}=r_{1,2}^2$ for $r_{1,2}$ defined
  in Section~\ref{sec:analysis-of-the-ode}. Hence, using the change of variables $r=\sqrt{y} $ in~\eqref{eq:period-T}, we obtain the first expression in~\eqref{eq:period-change-of-variable}. 
By setting $y=S(\phi)$, we have $dy=2 \sqrt{(y-y_1)(y_2-y)} d\phi$, thus we obtain the last expression  in~\eqref{eq:period-T}.
\end{proof}

\subsection{Mass and momentum inside the domain and on the boundaries} \label{sec:Mass-Momentum}
We start by finding the mass and momentum of the solutions inside the domains $D_1, D_2$ and $D_3$, where we know that $ u(x)=r(x) e^{i \phi (x)}$. We have
\begin{align}
M(u)&= \frac{1}{2}\int_0^{T(E,J)} |u|^2  dx = \frac{1}{2} \int_0^{T(E,J)} (r(x))^2 dx \notag
\\
&= \int_{r_1}^{r_2} \frac{r^2 dr}{\sqrt{2 (E-V_J(r))}} = \sqrt{2} \int_0^{\frac{\pi}{2}} \frac{S(\phi)}{\sqrt{b(S(\phi)-y_3)}} d\phi= \tilde M(J,E),\label{tildeMPdef}
\\
P(u) &= \frac{1}{2} \Im \int_0^{T(E,J)} u \overline{u}_x dx=  \frac{1}{2} \int_0^T J dx = \frac{1}{2}TJ= \tilde P(J,E).\notag
\end{align}
The notation $\tilde{M}$ and $\tilde{P}$ is used to denote the mass and momentum as functions of $J$ and $E$.

Next we want to find the mass and momentum of the solutions on the boundaries $\{(J,E_{-}(J))\}$ for the focusing/defocusing cases and on $\{(J,E_{+}(J))\}$ for the defocusing case.

\begin{proposition} \label{prop:period-T-on-E-}
  Let $b<0$, $a>0$ ($b>0$, $a\geq0$, or $b>0$, $a<0$ respectively). Let $(J,E) \in D_1$ ($D_2, D_3$ respectively). As described in Section~\ref{sec:analysis-of-the-ode}, there exists $Q\in\left(\sqrt{\frac a3},\sqrt{a}\right)$ ($Q\in \left(\sqrt{a},\infty\right)$, $Q\in \left(0,\infty\right)$ respectively)  such that $J$ can be parametrized as $J=Q(Q^2-a)/b$. When $E \rightarrow E_{-}(J)$ the following holds:
    \begin{align*}
\lim\limits_{E \rightarrow E_{-}(J)} T(J,E)&= \frac{\pi \sqrt{2}}{\sqrt{3Q^2-a}}, \\ 
\lim\limits_{E \rightarrow E_{-}(J)} \tilde{M}(J,E)&=\frac{(Q^2-a)}{2b} \frac{\pi \sqrt{2}}{\sqrt{3Q^2-a}},
\\
\lim\limits_{E \rightarrow E_{-}(J)} \tilde{P}(J,E)&= Q \frac{(Q^2-a)}{2b} \frac{\pi \sqrt{2}}{\sqrt{3Q^2-a}}.
    \end{align*}
\end{proposition}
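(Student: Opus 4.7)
The plan is to leverage the second integral representation in Lemma~\ref{lem:period} and the $\phi$-form of $\tilde M$ in~\eqref{tildeMPdef}, both of which live on the fixed interval $[0,\pi/2]$ and hence do not degenerate when $E \to E_{-}(J)$ (unlike the original $y$-integrals, whose range of integration collapses when $y_1 \to y_2$). The overall strategy is to track the roots of the cubic polynomial $\Pi(y) = -by^3 - 2ay^2 + 4Ey - 2J^2$ in the limit and then pass to the limit under the integral sign.

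First I would observe that on the boundary $E = E_{-}(J)$ the conditions $V_J'(r_Q) = 0$ and $V_J(r_Q) = E_{-}(J)$ imply that $r \mapsto r^2(E - V_J(r))$ has a double root at $r_Q = \sqrt{(Q^2-a)/b}$, so that $\Pi$ acquires a double root at $y = r_Q^2$. Consequently $y_1, y_2 \to r_Q^2$ as $E \to E_{-}(J)$. Expanding $\Pi(y) = -b(y-y_1)(y-y_2)(y-y_3)$ and matching the coefficient of $y^2$ yields the Vieta relation $y_1 + y_2 + y_3 = -2a/b$, from which
\begin{equation*}
y_3 \to -\frac{2a}{b} - \frac{2(Q^2-a)}{b} = -\frac{2Q^2}{b},
\qquad
b(r_Q^2 - y_3) \to (Q^2 - a) + 2Q^2 = 3Q^2 - a.
\end{equation*}
The limiting quantity $3Q^2 - a$ is strictly positive in each of the three regimes $D_1, D_2, D_3$, as one checks directly from the ranges of $Q$ recorded in the statement.

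Next, since $S(\phi) = y_1 \cos^2\phi + y_2 \sin^2\phi \to r_Q^2$ uniformly in $\phi \in [0,\pi/2]$ and the denominator $\sqrt{b(S(\phi)-y_3)}$ stays uniformly bounded away from zero, the integrands in the $\phi$-forms of $T(J,E)$ and $\tilde M(J,E)$ are uniformly bounded on the compact interval $[0,\pi/2]$. Dominated convergence then gives
\begin{equation*}
\lim_{E \to E_{-}(J)} T(J,E) = 2\sqrt{2} \int_0^{\pi/2} \frac{d\phi}{\sqrt{3Q^2 - a}} = \frac{\pi \sqrt{2}}{\sqrt{3Q^2 - a}},
\end{equation*}
and, using $r_Q^2 = (Q^2 - a)/b$,
\begin{equation*}
\lim_{E \to E_{-}(J)} \tilde M(J,E) = \sqrt{2} \int_0^{\pi/2} \frac{r_Q^2}{\sqrt{3Q^2 - a}}\, d\phi = \frac{(Q^2-a)}{2b}\cdot \frac{\pi \sqrt{2}}{\sqrt{3Q^2 - a}}.
\end{equation*}
Finally, the identity $\tilde P(J,E) = \tfrac{1}{2} J\, T(J,E)$ from~\eqref{tildeMPdef}, together with the parametrization $J = Q(Q^2 - a)/b$, produces the asserted limit for $\tilde P$.

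No serious obstacle arises once the $\phi$-form has been adopted; the only points requiring attention are the avoidance of the $y$-form (whose integrand becomes singular at the merging endpoints) and the verification, case by case in $D_1, D_2, D_3$, that $3Q^2 - a > 0$ under the parametrization constraints on $Q$.
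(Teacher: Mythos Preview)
Your proposal is correct and follows essentially the same approach as the paper: both use the $\phi$-form of the period and mass integrals from Lemma~\ref{lem:period} and~\eqref{tildeMPdef}, track the merging $y_1,y_2\to r_Q^2$ together with the Vieta relation $y_1+y_2+y_3=-2a/b$ to compute $b(S(\phi)-y_3)\to 3Q^2-a$, and then pass to the limit under the integral; the momentum limit comes from $\tilde P=\tfrac12 JT$. Your write-up is slightly more explicit about the justification (uniform convergence of $S(\phi)$, positivity of $3Q^2-a$ in each regime), but the argument is the same.
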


This proposition allows us to define the curves representing the values of $(\tilde M, \tilde P)$ along the boundary of $D_j$, $j=1,2,3$ as curves parametrized by $Q$ by
\[
  M_{\partial}(Q)=\frac{(Q^2-a)}{2b} \frac{\pi \sqrt{2}}{\sqrt{3Q^2-a}},\quad
  P_{\partial}(Q)= Q \frac{(Q^2-a)}{2b} \frac{\pi \sqrt{2}}{\sqrt{3Q^2-a}}.  
\]
Observe that $  P_{\partial}(Q)= Q M_{\partial}(Q)$. 
The curves in the different configurations are represented in Figure~\ref{fig:P-as-function-of-M}. 
\begin{figure}[htpb!]
\centering
\includegraphics[width=.3\textwidth]{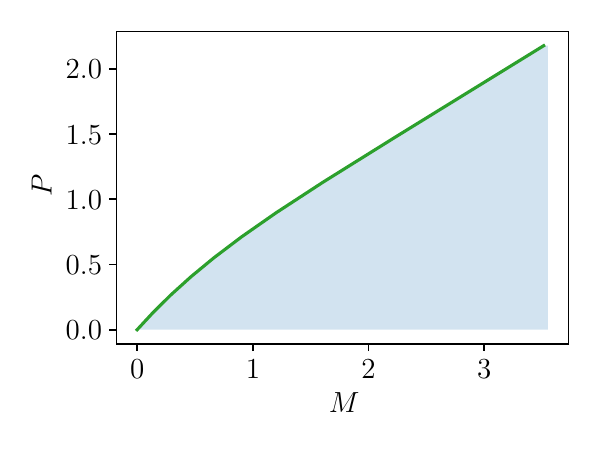}~
\includegraphics[width=.3\textwidth]{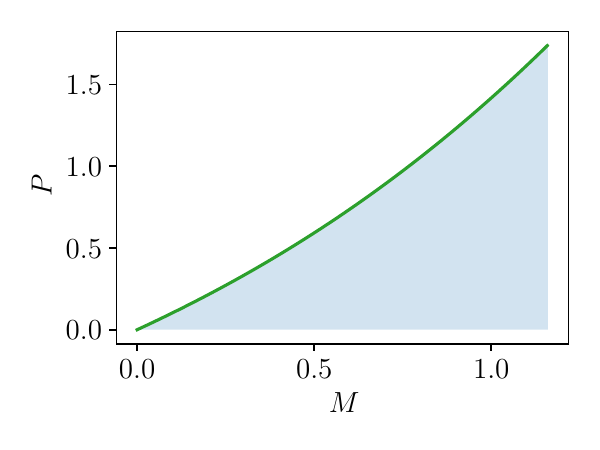}~
\includegraphics[width=.3\textwidth]{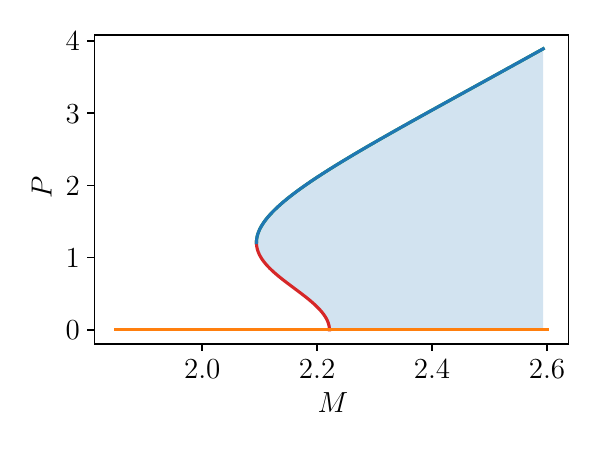}
\caption{The curve $Q\mapsto  ( M_{\partial}(Q), P_{\partial}(Q))$ for, at the left 
   $\left(b=-1, a=1,Q=\sqrt{\frac a3},\dots,\sqrt{a}\right)$, at the middle
    $\left(b=1, a=1,Q=\sqrt{a},\dots,1.5\right)$, and at the right
     $\left(b=1, a=-1,Q=0,\dots,1.5\right)$. On the right the range $Q\in\left(0, \sqrt{\frac a3}\right)$ is in red,  the range $Q\in\left(\sqrt{\frac a3},1.5\right)$ is in blue, and we have added in orange the curve $(\tilde M(0,E), \tilde P(0,E))$ for $J=0$ and $E=-\frac{a^2}{4b},\dots,12$.}

\label{fig:P-as-function-of-M}
\end{figure}

\begin{remark}
  \label{rmk:boundaries}
  We observe on Figure~\ref{fig:P-as-function-of-M} that the curves corresponding to the  boundaries of the images by $\Psi$ of the domains $D_1,D_2,D_3$  are different
 depending on $a$ and $b$. In particular, the image of the  boundary of the domain $D_3$ cannot be expected  to be the boundary of the image of $D_3$ by the mapping $(J,E)\mapsto (\tilde M, \tilde P)$, as it does not encompass the half line which is the image of $J=0$, $-\frac {a^2}{4b}< E<\infty$. Hence part of the image of the mapping is outside the shaded region in the right picture of Figure~\ref{fig:P-as-function-of-M}. 
    \end{remark}

\begin{proof}[Proof of Proposition~\ref{prop:period-T-on-E-}]
  We start from the expression~\eqref{eq:period-change-of-variable} for $T(J,E)$.
  Since $ y_1, y_2, y_3$ are solutions of the cubic equation  $-b y^3 -2 a y^2 + 4Ey -2J^2=0$, we have $ y_1+y_2+y_3=-\frac{2a}{b}$, which implies that $y_3=-\frac{2a}{b}-(y_1+y_2)$. Moreover, when $E \rightarrow E_{-}(J)$, we have $y_1,y_2\rightarrow r_Q^2=\frac{Q^2-a}{b}$. Therefore, as $E \rightarrow E_{-}(J)$, we have
  \begin{equation*}
\sqrt{b(S(\phi)-y_3)}\to \sqrt{\left( 3Q^2-a\right)}.
\end{equation*}
The limit being independent of $\phi$, we pass directly in the limit in~\eqref{eq:period-change-of-variable} to obtain

\begin{equation*}
\lim_{E \to E_{-}(J)}T(J,E) =  \frac{\pi \sqrt{2}}{\sqrt{3Q^2-a}}.
\end{equation*}

We now find the limit of the mass:
\begin{multline} \label{eq:mass-on-the-boundary}
    \lim\limits_{E \rightarrow E_{-}(J)} \tilde{M}(J,E)= \lim\limits_{E \rightarrow E_{-}(J)}\sqrt{2} \int_0^{\frac\pi2} \frac{S(\phi)}{\sqrt{b(S(\phi)-y_3)}} d\phi\\=\sqrt{2} \int_0^{\frac\pi2} \frac{\frac{Q^2-a}{b}}{\sqrt{3Q^2-a}} d\phi
    = \frac{(Q^2-a)}{2b} \frac{\pi \sqrt{2}}{\sqrt{3Q^2-a}}.
\end{multline}

The limit of the momentum is obtained by a direct computation:
\begin{equation}\label{eq:momentum-on-the-boundary}
\lim\limits_{E \rightarrow E_{-}(J)} \tilde{P}(J,E)=\lim\limits_{E \rightarrow E_{-}(J)}\frac{1}{2}T(J,E)J= \frac{1}{2}Q \frac{(Q^2-a)}{b} \frac{\pi \sqrt{2}}{\sqrt{3Q^2-a}}.
\end{equation}
This finishes the proof. 
  \end{proof}

\begin{proposition}  \label{prop:period-T-on-E+}
  Assume that $b<0$ and $a>0$.
  Let $(J,E)\in D_1$.
 When $E \rightarrow E_{+}(J)$ the following holds:
\begin{align*}
    \lim\limits_{E \rightarrow E_{+}(J)} T(J,E)&= + \infty, \\
    \lim\limits_{E \rightarrow E_{+}(J)} {\tilde{M}(J,E)}&= +\infty,&
  \lim\limits_{E \rightarrow E_{+}(J)} \frac{\tilde{M}(J,E)}{T(J,E)}&= \frac{q^2-a}{2b},\\
    \lim\limits_{E \rightarrow E_{+}(J)} {\tilde{P}(J,E)}&= +\infty,
    & \lim\limits_{E \rightarrow E_{+}(J)} \frac{\tilde{P}(J,E)}{T(J,E)}&= \frac{q(q^2-a)}{2b}.
\end{align*}
\end{proposition}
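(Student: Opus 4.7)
The plan is to first pin down the limiting positions of the roots $y_1<y_2<y_3$ of $\Pi(y)=-by^3-2ay^2+4Ey-2J^2$ as $E\to E_+(J)$, and then push these asymptotics through the integral representations of $T$, $\tilde M$, $\tilde P$ from Lemma~\ref{lem:period} and~\eqref{tildeMPdef}. At $E=E_+(J)$ the potential $V_J$ attains its local maximum at $r_q$, so $r_q$ is a double root of $V_J(r)=E$; translating through $\Pi(y)=4y(E-V_J(\sqrt y))$, the two larger roots $y_2,y_3$ coalesce at $r_q^2=(q^2-a)/b$, while $y_1$ stays bounded away. Vieta's relation $y_1+y_2+y_3=-2a/b$ then pins $y_1\to-2q^2/b$, and in particular $y_3-y_1\to(3q^2-a)/b>0$, so the merging pair remains well separated from $y_1$.

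The divergence of $T$ follows from the trigonometric form in Lemma~\ref{lem:period}. The factorization $b(S(\phi)-y_3)=-b(y_3-y_1)(1-k^2\sin^2\phi)$ with $k^2=(y_2-y_1)/(y_3-y_1)$ reduces the integral to
\[
T(J,E)=\frac{2\sqrt 2}{\sqrt{-b(y_3-y_1)}}\,\mathbf{K}(k).
\]
By the first step, $k^2\to 1$ and $-b(y_3-y_1)\to a-3q^2>0$, and the classical singularity $\mathbf{K}(k)\to\infty$ as $k\to 1$ yields $T\to\infty$. Since the exact identity $\tilde P(J,E)=\tfrac12\,J\,T(J,E)$ already appears in~\eqref{tildeMPdef}, both the divergence of $\tilde P$ and the ratio $\tilde P/T\to J/2=q(q^2-a)/(2b)$ fall out immediately.

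For $\tilde M$ and its ratio, I would decompose $S(\phi)=y_2+(y_1-y_2)\cos^2\phi$ inside the integral defining $\tilde M$ in~\eqref{tildeMPdef} to get
\[
\tilde M(J,E)=\frac{y_2}{2}T(J,E)+(y_1-y_2)\sqrt 2\int_0^{\pi/2}\frac{\cos^2\phi}{\sqrt{b(S(\phi)-y_3)}}\,d\phi.
\]
The remainder integral is uniformly bounded as $E\to E_+$, thanks to the elementary lower bound $b(S(\phi)-y_3)\geq b(y_1-y_3)\cos^2\phi$ (both $b(y_1-y_3)$ and $b(y_2-y_3)$ are nonnegative), which absorbs one factor of $|\cos\phi|$ and leaves an integrand controlled by $|\cos\phi|/\sqrt{b(y_1-y_3)}$; by Step~1, $b(y_1-y_3)\to a-3q^2>0$, so this bound is uniform. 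Combined with $y_2\to(q^2-a)/b$ and $T\to\infty$, this yields both $\tilde M\to\infty$ and $\tilde M/T\to y_2/2=(q^2-a)/(2b)$. The only non-routine step is this uniform control of the remainder integral; everything else is either the exact identity for $\tilde P$ or follows from the classical divergence of $\mathbf{K}(k)$ at $k=1$ together with continuity of the roots of $\Pi$ in $E$.
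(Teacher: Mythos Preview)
Your argument is correct and follows the same overall strategy as the paper: use the trigonometric integral representations from Lemma~\ref{lem:period} and~\eqref{tildeMPdef}, observe that $y_2,y_3\to r_q^2$ as $E\to E_+(J)$, and read off the limits. Your execution is somewhat more explicit than the paper's. For $T\to\infty$ you reduce to $T=\frac{2\sqrt2}{\sqrt{-b(y_3-y_1)}}\,\mathbf K(k)$ with $k^2=(y_2-y_1)/(y_3-y_1)\to1$, whereas the paper simply notes that the pointwise limit of the integrand gives $\int_0^{\pi/2}\frac{d\phi}{\cos\phi}=\infty$. For $\tilde M/T$, the paper argues heuristically that since both integrals diverge at $\phi=\pi/2$ the ratio tends to $\tfrac12 S(\pi/2)=y_2/2$; your decomposition $\tilde M=\tfrac{y_2}{2}T+(y_1-y_2)\sqrt2\int_0^{\pi/2}\frac{\cos^2\phi}{\sqrt{b(S(\phi)-y_3)}}\,d\phi$ together with the uniform bound on the remainder makes that step rigorous. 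The $\tilde P$ part is handled identically in both via $\tilde P=\tfrac12 JT$.
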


\begin{proof}
Recall that $T(J,E)= 2 \sqrt{2} \int_{0}^{\frac{\pi}{2}} \frac{d\phi}{\sqrt{b(S(\phi)-y_3)}}$ given by \eqref{eq:period-change-of-variable}, and $\tilde{M}$ and $\tilde{P}$ are defined by \eqref{tildeMPdef}. When $E \rightarrow E_{+}(J)$, both $y_2,y_3 \rightarrow r_q^2$, therefore 
\begin{equation*}
\lim\limits_{E \rightarrow E_{+}(J)} T(J,E)= \frac{2 \sqrt{2}}{\sqrt{b(y_1-y_2)}} \int_{0}^{\frac{\pi}{2}} \frac{d\phi}{\cos (\phi)}=+\infty.
\end{equation*}
Because both integrals diverge at $\phi = \pi/2$, the limit of $\frac{\tilde{M}(J,E)}{T(J,E)}$ is $\frac12S(\pi/2)|_{y_2=r_q^2}$.
The last limit is because $\frac{\tilde{P}(J,E)}{T(J,E)}=J/2$.
    \end{proof}

\subsection{Diffeomorphism} \label{sec:diffeo}

In this section we establish a diffeomorphic correspondence between $J,E$ and $M,P$. In light  of Figure~\ref{fig:P-as-function-of-M} and Remark~\ref{rmk:boundaries}, the map $(J,E)\mapsto (\tilde M,\tilde P)$ is not a diffeomorphism in the focusing case, at least when $a=-1$. Therefore, we restrict our study to the defocusing case. For simplicity in notation, in this section, we set
\[
  b=-1,\quad a=1.
\]
The arguments used still apply for generic $b<0$, $a>0$. 
Our  strategy will be based in part on the results and methods of \cite{GaHa07-2}.
We start with monotonicity results on $T$. 
\begin{proposition}
  \label{prop:T_monotone}
We have  $ \frac{\partial T}{\partial E} >0$ and $ \frac{\partial T}{\partial J} <0$ for all $ (J,E) \in D_1$. 
\end{proposition}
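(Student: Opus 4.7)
The strategy is to express $T$ as an integral over a fixed interval, differentiate under the integral sign, and analyze the sign of the resulting integral. Starting from the second expression of Lemma~\ref{lem:period}, which for $a=1,\,b=-1$ reads
\[
T(J,E)=2\sqrt 2\int_0^{\pi/2}\frac{d\phi}{\sqrt{y_3-S(\phi)}},\qquad S(\phi)=y_1\cos^2\phi+y_2\sin^2\phi,
\]
where $0<y_1<y_2<y_3$ are the three roots of $\Pi(y)=y^3-2y^2+4Ey-2J^2$, depending smoothly on $(J,E)\in D_1$.

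Implicit differentiation of $\Pi(y_i)=0$ gives
\[
\frac{\partial y_i}{\partial E}=-\frac{4y_i}{\Pi'(y_i)},\qquad \frac{\partial y_i}{\partial J}=\frac{4J}{\Pi'(y_i)},
\]
where $\Pi'(y_i)=\prod_{j\ne i}(y_i-y_j)$. From the ordering one reads off $\Pi'(y_1),\Pi'(y_3)>0$ and $\Pi'(y_2)<0$, hence $\partial_E y_1<0,\partial_E y_2>0,\partial_E y_3<0$ and $\partial_J y_1,\partial_J y_3>0,\partial_J y_2<0$. Differentiating $T$ under the integral sign yields
\[
\frac{\partial T}{\partial E}=-\sqrt 2\int_0^{\pi/2}\frac{\partial_E y_3-\cos^2\phi\,\partial_E y_1-\sin^2\phi\,\partial_E y_2}{(y_3-S(\phi))^{3/2}}\,d\phi,
\]
and analogously for $\partial_J T$.

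The delicate step is the sign analysis, since $\Pi'(y_2)<0$ makes the numerator of mixed sign and a pointwise bound is not available. My preferred route is to convert the first formula of Lemma~\ref{lem:period} to a complete elliptic integral via the classical substitution $y=y_1+(y_2-y_1)\sin^2\theta$, obtaining
\[
T=\frac{2\sqrt 2}{\sqrt{y_3-y_1}}\,\mathbf K(k),\qquad k^2=\frac{y_2-y_1}{y_3-y_1}\in(0,1),
\]
then differentiate using $\mathbf K'(k)>0$ together with the symmetric-function identities $\sum_i 1/\Pi'(y_i)=0$, $\sum_i y_i/\Pi'(y_i)=0$, $\sum_i y_i^2/\Pi'(y_i)=1$ (coming from partial-fraction expansions of $y^n/\Pi(y)$ together with $y_1+y_2+y_3=2$ and $y_1y_2y_3=2J^2$) to collect $\partial_E T$ and $\partial_J T$ into a manifestly signed form. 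An alternative is to import the integration-by-parts technique of Gallay--Haragus~\cite{GaHa07-2}, which is tailored to the defocusing cubic potential and produces a positive kernel directly.

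The main obstacle is this final algebraic reduction: although everything is explicit, assembling the symmetric-function identities into a signed expression takes careful bookkeeping. As a sanity check, Propositions~\ref{prop:period-T-on-E-} and~\ref{prop:period-T-on-E+} already show that $T$ is finite at $\{E=E_-(J)\}$ and diverges at $\{E=E_+(J)\}$, which is compatible with $\partial_E T>0$; a continuity argument at the boundary of $D_1$ can sometimes be combined with the interior differentiation to complete the sign check for $\partial_J T$ as well.
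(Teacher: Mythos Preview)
Your proposal stops short of an actual proof: you set up the differentiation correctly and identify the signs of $\partial_E y_i$ and $\partial_J y_i$, but then you only \emph{sketch} two possible routes (elliptic integrals plus symmetric-function identities, or the Gallay--Haragus integration by parts) without carrying either one out. You explicitly call the final sign reduction ``the main obstacle'' and leave it open. As it stands, this is a strategy memo rather than a proof.

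The paper's argument avoids your obstacle with a device you did not try. Instead of differentiating with all three roots treated as separate functions of $(J,E)$, it uses the trace relation $y_1+y_2+y_3=2$ to eliminate $y_3$ and regards $T$ as a function of the two independent variables $(y_1,y_2)$. The chain-rule coefficients
\[
A_1=\frac{\partial T}{\partial y_1}=\sqrt2\int_0^{\pi/2}\frac{1+\cos^2\phi}{(y_3-S(\phi))^{3/2}}\,d\phi,\qquad
A_2=\frac{\partial T}{\partial y_2}=\sqrt2\int_0^{\pi/2}\frac{1+\sin^2\phi}{(y_3-S(\phi))^{3/2}}\,d\phi
\]
are then manifestly positive, and the pairing $\phi\leftrightarrow \tfrac{\pi}{2}-\phi$ together with $y_1<y_2$ gives $A_2>A_1$. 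Writing
\[
\frac{\partial T}{\partial E}=(A_2-A_1)\,\frac{\partial y_2}{\partial E}+A_1\!\left(\frac{\partial y_1}{\partial E}+\frac{\partial y_2}{\partial E}\right)
\]
exhibits $\partial_E T$ as a sum of two positive terms (using $\partial_E y_2>0$ and $\partial_E y_1+\partial_E y_2=-\partial_E y_3>0$); the same decomposition gives $\partial_J T<0$. This is more elementary than either of your proposed routes and requires neither the elliptic-integral machinery nor the Gallay--Haragus argument. In fact your displayed formula for $\partial_E T$ becomes exactly this after substituting $\partial_E y_3=-\partial_E y_1-\partial_E y_2$; you were one line away from the clean structure.
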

\begin{proof}
Let $ (J,E) \in D_1$. Since $ y_1, y_2, y_3$ are solutions of the cubic equation $ \Pi(y)=y^3 - 2y^2 + 4Ey - 2J^2 =0$, we have $ y_1+y_2+y_3=2$, and 
\begin{equation} \label{eq:partialyi-E}
\frac{\partial y_i}{\partial E}= - \frac{4y_i}{3y_i^2 - 4y_i +4E}, \quad \frac{\partial y_i}{\partial J}=  \frac{4J}{3y_i^2 - 4y_i +4E}, \quad i=1,2,3.
\end{equation}
In particular
\begin{equation*}
\frac{\partial y_1}{\partial E} <0, \quad \frac{\partial y_2}{\partial E} >0, \quad \frac{\partial y_1}{\partial E}+ \frac{\partial y_2}{\partial E}=- \frac{\partial y_3}{\partial E} >0,
\end{equation*}
because $ \Pi'(y_1) >0,\ \Pi'(y_2)<0,\ \Pi'(y_3)>0$, where $\Pi(y)= y^3 - 2y^2 + 4Ey - 2J^2$ was defined in Lemma~\ref{lem:period}. Similarly, since $J>0$,
\begin{equation*}
\frac{\partial y_1}{\partial J} >0, \quad \frac{\partial y_2}{\partial J} <0, \quad \frac{\partial y_1}{\partial J}+ \frac{\partial y_2}{\partial J}=- \frac{\partial y_3}{\partial J} <0.
\end{equation*}
On the other hand, differentiating~\eqref{eq:period-change-of-variable} with respect to $E$ and $J$, we find by chain rule, treating $y_1,y_2$ as independent variables with $E,J$ solvable from
\begin{equation}\label{EJy12}
E=y_1y_2+(y_1+y_2)y_3,\quad -2J^2=-y_1y_2y_3, \quad y_3=2-y_1-y_2,
\end{equation}
that
\begin{equation*}
\frac{\partial T}{\partial E}= A_1 \frac{\partial y_1}{\partial E}+ A_2 \frac{\partial y_2}{\partial E}, \quad \quad \quad \frac{\partial T}{\partial J}= A_1 \frac{\partial y_1}{\partial J}+ A_2 \frac{\partial y_2}{\partial J},
\end{equation*}
where $A_j = \frac{\partial T}{\partial y_j}$, $j=1,2$,
\begin{equation*}
A_1= \sqrt{2} \int_0^{\frac{\pi}{2}} \frac{1 + \cos^2 \phi }{(y_3-S(\phi))^{\frac{3}{2}}} d\phi, \quad A_2= \sqrt{2} \int_0^{\frac{\pi}{2}} \frac{1 + \sin^2 \phi }{(y_3-S(\phi))^{\frac{3}{2}}} d\phi.
\end{equation*}

We claim that
\begin{equation}\label{A2-A1}
A_2 - A_1= \sqrt{2} \int_0^{\frac{\pi}{2}} \frac{ \sin^2 \phi - \cos^2 \phi  }{(y_3-S(\phi))^{\frac{3}{2}}}  d\phi>0.
\end{equation}
Indeed, if we denote the integrand $f(\phi)= \frac{ \sin^2 \phi - \cos^2 \phi  }{(y_3-S(\phi))^{\frac{3}{2}}}  $, then for $0<\phi<\frac \pi4$ we have $S(\frac \pi 2-\phi) >S(\phi)$ due to $y_1<y_2$, and hence
$
f(\phi)+f(\frac \pi 2-\phi) >0$. Thus $A_2-A_1=2^{-1/2} \int_0^{\pi/2} [f(\phi)+f(\frac \pi 2-\phi)] d\phi>0$.

We conclude that 
\begin{equation*}
\frac{\partial T}{\partial E}= (A_2 - A_1) \frac{\partial y_2}{\partial E} + A_1 \left( \frac{\partial y_1}{\partial E} + \frac{\partial y_2}{\partial E} \right) >0.
\end{equation*}
Similarly, 
\begin{equation*}
\frac{\partial T}{\partial J}= (A_2 - A_1) \frac{\partial y_2}{\partial J} + A_1 \left( \frac{\partial y_1}{\partial J} + \frac{\partial y_2}{\partial J} \right) <0.
\end{equation*}
This concludes the proof.
\end{proof}
For all $(J,E) \in D_1$, let%
\begin{equation*}
\Delta = \begin{vmatrix}
\frac{\partial \tilde P}{ \partial E} & \frac{\partial \tilde M}{\partial E} \\[4pt]
\frac{\partial \tilde P}{ \partial J} & \frac{\partial \tilde M}{\partial J}
\end{vmatrix}.
\end{equation*} 
\begin{proposition}
For all $(J,E) \in D_1$ we have $\Delta <0$.
\end{proposition}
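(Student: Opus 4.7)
The plan is to exploit the product structure $\tilde P = JT/2$ from~\eqref{tildeMPdef} together with the chain-rule techniques from the proof of Proposition~\ref{prop:T_monotone}. First I would write $\tilde P_E = JT_E/2$ and $\tilde P_J = T/2 + JT_J/2$, which gives
\[
\Delta = \frac{J}{2}\bigl(T_E \tilde M_J - T_J \tilde M_E\bigr) - \frac{T}{2}\tilde M_E.
\]
Since $J > 0$ and $T > 0$, the task reduces to the two sign statements $\tilde M_E > 0$ and $T_E \tilde M_J - T_J \tilde M_E < 0$.

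Next, treating $y_3 = 2 - y_1 - y_2$ as the dependent variable, I would set $A_i = \partial T/\partial y_i$ (the same $A_1, A_2$ as in the proof of Proposition~\ref{prop:T_monotone}) and $B_i = \partial \tilde M/\partial y_i$; the integrands defining $B_1, B_2$ are manifestly positive. The chain rule through $(y_1, y_2)$ yields the factorization
\[
T_E \tilde M_J - T_J \tilde M_E = \det\frac{\partial(y_1, y_2)}{\partial(E, J)}\cdot(A_1 B_2 - A_2 B_1),\qquad \det\frac{\partial(y_1, y_2)}{\partial(E, J)} = \frac{16J(y_2 - y_1)}{\Pi'(y_1)\Pi'(y_2)},
\]
and the Jacobian is negative by the signs $\Pi'(y_1) > 0$ and $\Pi'(y_2) < 0$ already established in Proposition~\ref{prop:T_monotone}. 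The very same argument used there to prove $T_E > 0$ yields $\tilde M_E > 0$ once $B_2 > B_1$ is established, and $B_2 > B_1$ itself follows from the $\phi \leftrightarrow \pi/2 - \phi$ pairing argument used there for $A_2 > A_1$. Thus only the positivity of $A_1 B_2 - A_2 B_1$ remains, and this is the step I expect to be the main obstacle.

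To handle it, I would write $y_3 - S(\phi) = c - q\cos 2\phi$ with $c = y_3 - (y_1+y_2)/2 > 0$ and $q = (y_1-y_2)/2 < 0$, denote $I(\phi) = (y_3 - S(\phi))^{-3/2}$, and use the moments
\[
P_0 = \int_0^{\pi/2} I\,d\phi,\qquad R_1 = \int_0^{\pi/2} I\cos 2\phi\,d\phi,\qquad R_2 = \int_0^{\pi/2} I\cos^2 2\phi\,d\phi.
\]
Expanding $A_1 B_2 - A_2 B_1$ as a double integral in $(\phi, \psi)$, symmetrizing under $\phi \leftrightarrow \psi$, and using $S = (y_1+y_2)/2 + q\cos 2\phi$ give by direct elementary computation
\[
A_1 B_2 - A_2 B_1 = -2c P_0 R_1 + \frac{q}{2}\bigl(R_1^2 + 3 P_0 R_2\bigr).
\]
The involution $\phi \leftrightarrow \pi/2 - \phi$ applied to $R_1$ yields $R_1 < 0$, so the right-hand side equals $2cP_0|R_1| - \frac{|q|}{2}(R_1^2 + 3P_0 R_2)$; Cauchy--Schwarz $R_1^2 \leq P_0 R_2$ then gives the lower bound $2P_0(c|R_1| - |q|R_2)$, and it suffices to prove $c|R_1| > |q|R_2$. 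Setting $u = \cos 2\phi$ and using the same involution one more time rewrites this difference as $\int_0^1 \frac{u\,K(u)\,du}{2\sqrt{1-u^2}}$ with
\[
K(u) = c\bigl[(c-|q|u)^{-3/2} - (c+|q|u)^{-3/2}\bigr] - |q|u\bigl[(c-|q|u)^{-3/2} + (c+|q|u)^{-3/2}\bigr] = (c-|q|u)^{-1/2} - (c+|q|u)^{-1/2};
\]
since $c - |q| = y_3 - y_2 > 0$, the right-hand side is strictly positive on $(0, 1]$, which closes the estimate and gives $\Delta < 0$.
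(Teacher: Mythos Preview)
Your decomposition of $\Delta$ and the reduction to the two sign statements $\tilde M_E>0$ and $A_1B_2-A_2B_1>0$ coincide exactly with the paper's approach (the paper writes $C_i$ for your $B_i$). Your Jacobian factorization, the pairing argument for $B_2>B_1$, and the deduction of $\tilde M_E>0$ all match the paper. The moment identity $A_1B_2-A_2B_1=-2cP_0R_1+\tfrac{q}{2}(R_1^2+3P_0R_2)$ is correct, and your Cauchy--Schwarz step together with the final telescoping identity $K(u)=(c-|q|u)^{-1/2}-(c+|q|u)^{-1/2}>0$ closes the argument. So the proof is correct.

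The genuine difference lies in how $A_1B_2-A_2B_1>0$ is obtained. The paper notices the algebraic identity
\[
B_i=\tfrac{y_3}{2}A_i-\tilde C_i,\qquad \tilde C_1=\tfrac{\sqrt2}{2}\int_0^{\pi/2}\frac{\sin^2\phi\,d\phi}{\sqrt{y_3-S(\phi)}},\quad \tilde C_2=\tfrac{\sqrt2}{2}\int_0^{\pi/2}\frac{\cos^2\phi\,d\phi}{\sqrt{y_3-S(\phi)}},
\]
so that $A_1B_2-A_2B_1=A_2\tilde C_1-A_1\tilde C_2$, and the positivity is then immediate from $A_2>A_1>0$ and $\tilde C_1>\tilde C_2>0$ (both by the same pairing trick). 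This one-line cancellation replaces your moment expansion, Cauchy--Schwarz estimate, and the $K(u)$ computation. Your route is self-contained and robust (it never appeals to a special relation between $A_i$ and $B_i$), but the paper's substitution $S=y_3-(y_3-S)$ makes the cross term $\tfrac{y_3}{2}A_1A_2$ cancel and yields a much shorter proof of the same inequality.
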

\begin{proof}
Recall that $A_1$ and $A_2$ are defined in the proof of Proposition~\ref{prop:T_monotone}. We have
\begin{align*}
\frac{\partial \tilde P}{\partial E} &= \frac{1}{2} J \frac{\partial T}{\partial E}=  \frac{1}{2} J \left( A_1 \frac{\partial y_1}{\partial E} + A_2 \frac{\partial y_2}{\partial E} \right),\\
\frac{\partial \tilde P}{\partial J} &= \frac{1}{2} \left( J \frac{\partial T}{\partial J}+ T \right)= \frac{1}{2} \left( J \left( A_1 \frac{\partial y_1}{\partial J}+ A_2 \frac{\partial y_2}{\partial J} \right) +T \right),\\
\frac{\partial \tilde M}{\partial E} &= C_1 \frac{\partial y_1}{\partial E} + C_2 \frac{\partial y_2}{\partial E}, \qquad
\frac{\partial \tilde M}{\partial J} = C_1 \frac{\partial y_1}{\partial J} + C_2 \frac{\partial y_2}{\partial J} .
\end{align*}
In the last two derivatives we have used chain rule treating $y_1,y_2$ as independent variables with $E,J$ solvable from \eqref{EJy12}, and
\begin{align*}
C_1&=\frac{\partial \tilde M}{\partial y_1} =
 \sqrt{2} \int_0^{\frac{\pi}{2} } 
\left\{ 
 \frac{ \cos^2 \phi}{ \sqrt{y_3-S(\phi)}} +   \frac{(1+\cos^2 \phi) S(\phi) }{2 \sqrt{y_3- S(\phi)}^3} \right\} d\phi ,\\
C_2&=\frac{\partial \tilde M}{\partial y_2} =
 \sqrt{2} \int_0^{\frac{\pi}{2} } 
\left\{ 
 \frac{ \sin^2 \phi}{ \sqrt{y_3-S(\phi)}} +   \frac{(1+\sin^2 \phi) S(\phi) }{2 \sqrt{y_3- S(\phi)}^3} \right\} d\phi .
\end{align*}

Therefore,
\begin{align*}
2\Delta &= \left( J \left( A_1 \frac{\partial y_1}{\partial E} + A_2 \frac{\partial y_2}{\partial E} \right)\right) \left(C_1 \frac{\partial y_1}{\partial J} + C_2 \frac{\partial y_2}{\partial J} \right) \\
&\qquad -   \left( J \left( A_1 \frac{\partial y_1}{\partial J}+ A_2 \frac{\partial y_2}{\partial J} \right) +T  \right) \left(C_1 \frac{\partial y_1}{\partial E} + C_2 \frac{\partial y_2}{\partial E} \right) \\
&= \left( A_1 C_2 -A_2 C_1 \right)  J  \left(\frac{\partial y_1}{\partial E} \frac{\partial y_2}{\partial J}- \frac{\partial y_2}{\partial E} \frac{\partial y_1}{\partial J}  \right) -  T \frac{\partial \tilde M}{\partial E}.
\end{align*}
Using the identity~\eqref{eq:partialyi-E}, we have
\begin{equation*}
\frac{\partial y_i}{\partial E} = -\frac{y_i}{J} \frac{\partial y_i}{\partial J}.
\end{equation*} 
Therefore
\begin{equation}\label{Wronskian}
\Delta= \frac{1}{2} (  A_1 C_2-A_2 C_1 ) (y_2 -y_1) \left( \frac{\partial y_1}{\partial J} \frac{\partial y_2}{\partial J} \right) - \frac{1}{2} T \frac{\partial \tilde M}{\partial E}. 
\end{equation}

We now rewrite $C_1$ and $C_2$,
\begin{align*}
C_1&=
 \sqrt{2} \int_0^{\frac{\pi}{2} } 
\left\{ 
 \frac{ \cos^2 \phi}{ \sqrt{y_3-S(\phi)}} +   \frac{(1+\cos^2 \phi) [y_3-(y_3-S(\phi)] }{2 \sqrt{y_3- S(\phi)}^3} \right\} d\phi= \frac {y_3}2 A_1 - \tilde C_1, \\
\tilde C_1&=\frac {\sqrt2}2\int_0^{\frac{\pi}{2}} \frac{\sin^2 \phi  }{(y_3-S(\phi))^{\frac{1}{2}}} d\phi.
\end{align*}
Similarly,
\[
C_2=  \frac {y_3}2 A_2 - \tilde C_2,\quad 
\tilde C_2=\frac {\sqrt2}2\int_0^{\frac{\pi}{2}} \frac{\cos^2 \phi  }{(y_3-S(\phi))^{\frac{1}{2}}} d\phi.
\]
By the same argument we showed $A_2-A_1>0$ below \eqref{A2-A1}, we have $\tilde C_1-\tilde C_2 >0$. Hence
\[
C_2-C_1 =\frac {y_3}2 (A_2-A_1) +\tilde C_1-\tilde C_2>0.
\]
As a consequence, by the inequalities below \eqref{eq:partialyi-E} we have
\begin{equation}\label{pMpE}
\frac{\partial \tilde M}{\partial E} = C_1 \frac{\partial y_1}{\partial E} + C_2 \frac{\partial y_2}{\partial E} = ( C_2 - C_1) \frac{\partial y_2}{\partial E} + C_1 ( \frac{\partial y_1 }{\partial E} + \frac{\partial y_2}{\partial E})>0.
\end{equation}
Moreover,
\begin{align*}
A_1 C_2 - A_2 C_1&= A_1  ( \frac {y_3}2 A_2 - \tilde C_2) - A_2 ( \frac {y_3}2 A_1 - \tilde C_1)\\
&= 
A_2  \tilde C_1 - A_1  \tilde C_2
>0.
\end{align*}
The above show that both terms on the right side of \eqref{Wronskian} are negative. 
Hence $\Delta <0$,
which concludes the proof.
\end{proof}

Let
\[
\tilde{D}=\left\{ (\tilde M,\tilde P) \in \R^2: \tilde M>0,\ 0<\tilde P < \frac {\tilde M}{\pi} {\sqrt{3 \tilde M^2+ \pi^2-\sqrt{9 \tilde M^4+4 \tilde M^2\pi^2}}} \right\}.
\]
The domain $\tilde{D}$ is the domain represented on the left of Figure~\ref{fig:P-as-function-of-M}.

\begin{theorem} \label{the:diffeo}
The map $\Psi : D_1 \to \tilde{D}$, defined by $\Psi(J,E)= (\tilde M(J,E),\tilde P(J,E))$
is a diffeomorphism.
\end{theorem}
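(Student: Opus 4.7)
My plan is to apply the following standard fact: a proper $C^\infty$ local diffeomorphism from a connected open set in $\R^2$ into a simply connected open set in $\R^2$ is a diffeomorphism onto the target (a proper local diffeomorphism is a covering map, and a connected cover of a simply connected space is trivial). Four ingredients have to be verified for $\Psi:D_1\to\tilde D$: local diffeomorphism, simple connectedness of $\tilde D$, image contained in $\tilde D$, and properness.

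The local diffeomorphism property is immediate from the preceding proposition: since $\Delta=\det D\Psi<0$ throughout $D_1$, the inverse function theorem yields that $\Psi$ is a $C^\infty$ local diffeomorphism. Simple connectedness of $\tilde D$ is read off its explicit description as the open region bounded by the half-line $\{\tilde P=0,\ \tilde M>0\}$ and a smooth arc tending to infinity. A preliminary algebraic step, squaring twice the defining inequality of $\tilde D$ with $Q=\tilde P/\tilde M$, identifies this upper arc as the curve $Q\mapsto(M_\partial(Q),P_\partial(Q))$ for $Q\in(1/\sqrt 3,1)$, which by Proposition~\ref{prop:period-T-on-E-} is precisely the limit of $\Psi$ on the boundary piece $\{E=E_-(J)\}$.

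Properness follows from analyzing $\Psi$ on each piece of $\partial D_1$. Since $D_1$ is bounded, any sequence in $D_1$ without accumulation has a subsequence converging to $\partial D_1$, and I would treat each piece in turn. On $\{J=0\}$ one gets $\tilde P=JT/2\to 0$ using that $T$ stays bounded thanks to the explicit $\sn$ description of the $J=0$ solutions, so the image escapes $\tilde D$ through $\{\tilde P=0\}$. On $\{E=E_-(J),\ 0<J<\sqrt{4/27}\}$, Proposition~\ref{prop:period-T-on-E-} forces the image to the upper arc of $\partial\tilde D$. On $\{E=E_+(J),\ 0<J<\sqrt{4/27}\}$, Proposition~\ref{prop:period-T-on-E+} gives $\tilde M,\tilde P\to\infty$. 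At the corner $(\sqrt{4/27},-1/3)$, $Q\to 1/\sqrt 3$ forces $M_\partial(Q),P_\partial(Q)\to\infty$. In each case the image escapes every compact subset of $\tilde D$, so $\Psi$ is proper.

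To conclude, one checks $\Psi(D_1)\subseteq\tilde D$ by connectedness: the image is connected, it avoids $\partial\tilde D$ by the local diffeomorphism property, and the signs $\partial_E\tilde M>0$ and $\partial_E\tilde P>0$ (already established) pin down on which side of the upper arc the interior lies. Then $\Psi$ is a proper local diffeomorphism onto its image, so $\Psi(D_1)$ is both open and closed in $\tilde D$, hence equal to $\tilde D$, and the covering-space argument above makes $\Psi$ a $C^\infty$ diffeomorphism. The main obstacle is the boundary analysis, especially controlling the $J\to 0$ limit uniformly in $E$ (which requires the explicit $J=0$ formulas) and handling the degenerate corner where $E_-$ and $E_+$ coincide so that a priori $T(J,E)$ is undefined; the remaining covering-space step is standard.
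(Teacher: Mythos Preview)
Your covering-space strategy is a genuinely different route from the paper's proof, and in outline it works, but one step is not yet justified. The paper argues much more directly: it fixes $\tilde M_0>0$, uses $\partial_E\tilde M>0$ to realise the level set $\Sigma=\{\tilde M=\tilde M_0\}$ as a smooth graph over $J$ that runs from the $J=0$ side of $\partial D_1$ (where $\tilde P=0$) to the $E=E_-$ side (where $\tilde P=\tilde P_0$ equals the upper-arc value for that $\tilde M_0$), and then uses $\Delta<0$ in the form $\frac{d}{dJ}\tilde P|_\Sigma>0$ to conclude that $\tilde P$ sweeps out $(0,\tilde P_0)$ bijectively along $\Sigma$. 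This single computation delivers injectivity, surjectivity, and the containment $\Psi(D_1)\subseteq\tilde D$ simultaneously, with no appeal to properness or covering theory.

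The place where your argument currently has a gap is precisely the containment $\Psi(D_1)\subseteq\tilde D$. The assertion that the image ``avoids $\partial\tilde D$ by the local diffeomorphism property'' is not correct: a local diffeomorphism has open image, but an open set can certainly meet the smooth curve $\partial\tilde D$. Nor do the mere signs $\partial_E\tilde M>0$, $\partial_E\tilde P>0$ decide on which side of the upper arc the image falls; the arc itself has positive slope, so moving up-and-right from a boundary point could land on either side. What you actually need is an orientation argument: the tangent to $E=E_-$ maps to the tangent of the arc, $\det D\Psi=\Delta<0$ reverses orientation, and hence the inward normal to $\partial D_1$ is sent to the inward normal to $\partial\tilde D$. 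This is not hard, but it must be written out; alternatively one can borrow the paper's monotonicity-along-$\Sigma$ computation, which settles the point immediately. Once that is fixed, your properness analysis (including the corner checks, where indeed $\tilde M\to\infty$ at $(0,E_+(0))$ and at $E_-=E_+$) together with simple connectedness of $\tilde D$ gives a valid proof. The trade-off is that your approach packages the global step into standard topology at the cost of a more delicate boundary and containment analysis, whereas the paper's level-curve argument is more hands-on but makes the bijection completely explicit.
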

\begin{proof}
  We know from Section~\ref{sec:Mass-Momentum} that the following hold:
\begin{gather*}
  \lim\limits_{E \rightarrow E_{-}(J)} T(J,E)= \frac{\pi \sqrt{2}}{\sqrt{3Q^2-1}},\quad \lim\limits_{E \rightarrow E_{+}(J)} T(J,E)= + \infty,\\
  \lim\limits_{E \rightarrow E_{-}(J)} \tilde M(J,E)= \frac{(1-Q^2)}{2}\frac{\pi \sqrt{2}}{\sqrt{3Q^2-1}}, \quad
\lim\limits_{E \rightarrow E_{-}(J)} \tilde P(J,E)= \frac{1}{2}Q (1-Q^2) \frac{\pi \sqrt{2}}{\sqrt{3Q^2-1}},\\
  \lim\limits_{E \rightarrow E_{+}(J)} \tilde M(J,E) = + \infty, \quad  \lim\limits_{E \rightarrow 0} \tilde M(0,E) =0,\\
\quad  \lim\limits_{E \rightarrow E_{+}(J)} \tilde P(J,E)= \infty.
\end{gather*}
The range of the map $ \tilde M:D_1 \rightarrow \R $ is exactly the interval $(0,+\infty)$ since $\frac{\partial \tilde M}{\partial E} >0 $ by \eqref{pMpE}. We fix $ \tilde M_0>0$, and let $\Sigma= \{ (J,E) \in D_1 : \tilde M(J,E)= \tilde M_0\}$.

By the implicit Function Theorem, $ \Sigma$ is a smooth curve in $D_1$ which can be represented as a graph over the $J$-axis. Let $E_0$ be the energy such that $\tilde M(0,E_0)= \tilde M_0$. Let 
$(J_0,E_{-}(J_0))$ be determined by the relation
\begin{equation*}
\frac{(1-Q_0^2)}{2}\frac{\pi \sqrt{2}}{\sqrt{3Q_0^2-1}} = \tilde M_0,
\end{equation*}
where $J_0=Q_0(1-Q_0^2)$, $Q_0 \in ( \frac{1}{\sqrt{3}},1) $. The curve $ \Sigma$ connects the boundary points $ (0,E_0)$ and $(J_0,E_{-}(J_0))$.
We know that $\Delta <0$ which implies that the restriction of $\tilde P$ to the curve $\Sigma$ is a strictly increasing function of $J$, because
\begin{equation*}
\frac{d}{d J}\tilde P |_{\Sigma}= \left( \frac{\partial \tilde M}{\partial E} \right) ^{-1} \left( \frac{\partial \tilde M}{\partial E} \frac{\partial \tilde P}{\partial J} - \frac{\partial \tilde M}{\partial J} \frac{\partial \tilde P}{\partial E} \right) >0 .
\end{equation*}
Thus $\tilde P$ varies from $0$ to $ \tilde P_0$ on the curve $\Sigma$, where
\begin{equation*}
\tilde P_0= \frac{1}{2}Q_0 (1-Q_0^2) \frac{\pi \sqrt{2}}{\sqrt{3Q_0^2-1}}=\frac{\tilde M_0}{\pi}{\sqrt{3 \tilde M_0^2+ \pi^2-\sqrt{9 \tilde M_0^4+4 \tilde M_0^2\pi^2}}}.
\end{equation*}
This proves that $\Psi$ is onto $\tilde{D}$ and therefore $\Psi$ is a diffeomorphism.
\end{proof}

\section{From the minimization problem to the ordinary differential equation} \label{sec:min-ode}
In this section, we aim to study the minimization problem~\eqref{eq:minim-problem} and establish the link between its minimizers and the solutions of the ordinary differential equation~\eqref{eq:ode}. One of the difficulties is identifying to which solution of~\eqref{eq:ode} a minimizer of~\eqref{eq:minim-problem} will correspond. 
The existence of minimizers can be proved by taking a sequence that minimizes the energy and showing that it is bounded in $H^{\theta}_T$, which implies that it has a weakly convergent subsequence in $H^{\theta}_T$. The weak convergence of the subsequence, along with the compactness of the embedding of $H^{\theta}_T$ into $L^p$ for $p \in (1, \infty)$, implies the convergence of the subsequence in $L^p$. Therefore, the minimizing sequence converges to a function that minimizes the energy.
Solutions of the minimization problem~\eqref{eq:minim-problem} were studied in particular cases in \cite{GuLeTs17}, in particular when the Floquet multiplier is either $\theta=0$ or $\theta=\pi$, and when the momentum constraint is either $0$ or absent. The general case was left open. In the present section, we will show that when the parameters of~\eqref{eq:minim-problem} are chosen to match those of a solution of~\eqref{eq:ode}, then this solution is obtained \emph{numerically} as the minimizer.

\subsection{Variational problems} \label{sec:variational-problems}

We start by recalling the result of Gustafson, Le Coz and Tsai \cite{GuLeTs17} on the global variational characterizations of the elliptic function periodic waves as constrained-mass energy minimizers among periodic functions. Later on, we use these results to compare our numerical method with the theoretical results.

For $m>0$, we consider the following minimization problem with fixed mass
\begin{equation} \label{eq:minimization-prob}
\min \{ \mathcal{E}(u): u \in H^{\theta}_T,\, M(u)=m\},
\end{equation}
and the minimization problem with fixed mass and momentum
\begin{equation} \label{eq:minimization-probl}
\min \{ \mathcal{E}(u): u \in H^{\theta}_T,\, M(u)=m,\, P(u)=0\}.
\end{equation}

Assume $b>0$ (focusing case). We have the following properties (see \cite[Propositions 3.2, 3.4]{GuLeTs17}).
\begin{enumerate}
\item Let $\theta=0$, we are in the periodic case:
\begin{enumerate}
\item If $0<m\leq \frac{\pi^2}{bT}$ then there exists a unique (up to phase shift) minimizer for~\eqref{eq:minimization-prob} and~\eqref{eq:minimization-probl}, which is the constant function $u_{min}=\sqrt{\frac{2m}{T}}$.
\item If $\frac{\pi^2}{b T}<m< \infty $ then there exists a unique minimizer (up to translations and phase shift) for~\eqref{eq:minimization-prob} and~\eqref{eq:minimization-probl}, which is the rescaled function $\dn_{\alpha, \beta,k}=\frac{1}{\alpha}\dn \left(\frac{.}{\beta},k \right)$, where the parameters $\alpha, \beta,k$ are uniquely determined. In particular given $k\in (0,1)$, if $b=2$, $T=2 \mathbf{K}(k)$ and $m= \mathbf{E}(k)$, then the unique minimizer is $\dn(.,k)$.
\end{enumerate} 
\item If $\theta=\pi$, we are in the anti-periodic case, then there exists a unique minimizer (up to translations and phase shift) for~\eqref{eq:minimization-prob} and~\eqref{eq:minimization-probl}, which is the rescaled function $\cn_{\alpha, \beta,k}=\frac{1}{\alpha}\cn \left(\frac{.}{\beta},k \right)$, where the parameters $\alpha, \beta,k$ are uniquely determined. In particular given $k\in (0,1)$, if $b=2 k^2$, $T=2 \mathbf{K}(k)$ and $m= \frac{\mathbf{E}-(1-k^2)\mathbf{K}}{k^2}$, then the unique minimizer is $\cn(.,k)$.
  \end{enumerate}

Assume now that $b<0$ (defocusing case). We have the following properties (\cite[Propositions 3.3, 3.6]{GuLeTs17}).
\begin{enumerate}
\item If $\theta=0$, we are in the periodic case, then there exists a unique minimizer (up to phase shift) for~\eqref{eq:minimization-prob} and~\eqref{eq:minimization-probl}, which is the constant function $u_{min}=\sqrt{\frac{2m}{T}}$.
\item If $\theta=\pi$, we are in the anti-periodic case: There exists a unique minimizer (up to phase shift) for~\eqref{eq:minimization-prob}  which is the plane wave $u_{min}=\sqrt{\frac{2m}{T}} e^{\frac{2i \pi x}{T}}$.
\end{enumerate}

    The following was conjectured, and confirmed numerically in \cite{GuLeTs17}. 
There exists a unique minimizer (up to translations and phase shift) of~\eqref{eq:minimization-probl}  which is the rescaled function $\sn_{\alpha, \beta,k}=\frac{1}{\alpha}\sn \left(\frac{.}{\beta},k \right)$, where the parameters $\alpha, \beta,k$ are uniquely determined. In particular given $k\in (0,1)$, if $b=-2 k^2$, $T=2 \mathbf{K}(k)$ and $m=\frac{(\mathbf{K}-\mathbf{E})}{k^2}$, then the unique minimizer is $\sn(.,k)$. 

\subsection{Numerical solutions of the ordinary differential equation} \label{sec:ode}
In this section we explain how we proceeded to compute numerically the solution of the ordinary differential equation~\eqref{eq:ode}:
\begin{equation*}
u_{xx}+au+b |u|^2 u=0.
\end{equation*}
The Python function \textit{odeintw} has been used to find the numerical solution. In order to use this function we need to find the period $T$ and the initial data at a fixed $(J,E)$.
Recall that $V_J(r)$ was defined in~\eqref{eq:def-E-V_J}. 
We start our numerical experiments by fixing a value of $J$ in the domain $D_1$ ($D_2$, $D_3$ respectively) for each defocusing and focusing case. Recall that when $J\neq 0$, $u$ is of the form $u(x)= r(x) e^{i \phi (x)} $.
If we are in the defocusing case we find the minimum $r_Q$ and the maximum $r_q$ of $V_J(r)$. We know that $E_{-}=V_J(r_Q)$ and $E_{+}=V_J(r_q)$. Then we choose $E$ such that $E_{-} \leq E < E_{+}$. Note that we chose $ E < E_{+}$ since $\lim\limits_{E \rightarrow E_{+}(J)} T(J,E)= + \infty$. Moreover we find the three positive roots $r_1<r_2<r_3$ of $E-V_J(r)$. 
If we are in the focusing case we find the minimum $r_Q$ of $V_J(r)$ and we have $E_{-}=V_J(r_Q)$. Then we choose $E$ such that $E_{-} \leq E $. Moreover we find the two positive roots $r_1<r_2$ of $E-V_J(r)$.
Recall that the period $T$ is given by~\eqref{eq:period-T}
\begin{equation*}
T(J,E)= 2\int_{r_1}^{r_2} \frac{1}{\sqrt{2(E-V_J(r))}}dr.
\end{equation*}
We will obtain similar expressions for Floquet multiplier, mass and momentum.
The Floquet multiplier is given by
\begin{multline*}
\theta(J,E)=\phi(x+T)-\phi(x)=\int_0^T \phi_x(x) dx =\int_0^T \frac{-J}{r(x)^2} dx = 2 \int_{r_1}^{r_2} \frac{-J}{r^2 \sqrt{2(E-V_J(r))}}dr.
\end{multline*}
The mass is given by
\begin{equation*}
M(J,E)=\frac{1}{2}\int_0^T |u|^2 dx =\frac{1}{2}\int_0^T r(x)^2 dx = \int_{r_1}^{r_2} \frac{r^2}{\sqrt{2(E-V_J(r))}}dr.
\end{equation*} 
The momentum is given by
\begin{multline*}
  P(J,E)=\frac{1}{2} \Im \int_0^T u \overline{u}_x dx=-\frac{1}{2}\int_0^T \phi_x r(x)^2 dx %
  =\frac{1}{2} \int_0^T J dx= \int_{r_1}^{r_2} \frac{J}{\sqrt{2(E-V_J(r))}}dr.
\end{multline*} 
Recall that on the boundary $E=E_{-}$ the period $T$ is given by the limit when $E$ tends to $E_{-}$, which is given in Proposition~\ref{prop:period-T-on-E-}.

To use the function \textit{odeintw}, we introduce the variable $v=u'$ and rewrite~\eqref{eq:ode} as a first order equation:
\[
\begin{pmatrix} 
u    \\ 
v
\end{pmatrix} '= \begin{pmatrix} 
v    \\ 
-au-b|u|^2 u 
\end{pmatrix}.
  \]
  The initial data will be given by $u(0)=r(0)=r_1$ and $u'(0)= i \phi'(0)r(0) =-\frac{J}{r_1} i$.

The results of the numerical experiments are presented in Section~\ref{subsec:experiments}.

\subsection{Continuous gradient flow with discrete normalization}
\label{sec:gradient-flow}
     
Our goal in this section is to present our approach for the computation of solutions of the minimization problem~\eqref{eq:minim-problem}.
We introduce an approach based on a normalized gradient flow: at each step we evolve in the direction of the gradient of the energy and renormalize the mass and the momentum of the outcome.
Compared to approach in the existing literature (see e.g.~\cite{BaDu04,FaJe18, GuLeTs17}), the key difference is that we  treat two constraints at the same time. 

Define an increasing sequence of time $0=t_0<...<t_n$ and take an initial data $u_0$. Between each time step, let $u(t,x)$ evolve along the gradient flow
\begin{equation} \label{eq:gradient-flow}
\left \{
\begin{array}{lcl}
u_t= - \mathcal{E}'(u)=u_{xx}+b|u|^2u,\\
u(t_n,x)=u_n(x),
\end{array}
\right.
\end{equation}
where $x \in \R$, $t_n<t<t_{n+1}$, $n \geq 0$.
At each time step $t_n$, the function is renormalized so as to have the desired mass $m$ and momentum $p$. Recall that, as noted in \cite{BaDu04}, the renormalization of the mass is equivalent to solving exactly the following ordinary differential equation:
\begin{equation}
u_t=\mu_nu, \quad t_n<t<t_{n+1}, \quad n\geq 0, \quad \mu_n= \frac{1}{t_{n+1}-t_n} \ln \left(\frac{\sqrt{2m}}{\|u(t_n)\|_{L^2}} \right).
\end{equation} 
Inspired by this remark, we consider the following numerical algorithm which gives the desired mass $m$ and momentum $p$ simultaneously by one flow. Suppose $ {\tilde{u}}_{n+1}$ has been computed from $u_n$ (e.g.~using a semi-implicit Euler scheme). To normalize $ {\tilde{u}}_{n+1}$ we proceed in the following way. Let
\begin{equation*}
m_0= M({\tilde{u}}_{n+1}), \quad p_0=P({\tilde{u}}_{n+1}), \quad k_0=\frac{1}{2} \int_0^T | \partial_x \tilde{u}_{n+1}|^2 dx.
\end{equation*}
Consider the following flow
\begin{equation*}
\partial_t u=( \mu + i \omega \partial_x) u.
\end{equation*}
We have at ($t=t_{n+1}$)
\begin{equation*}
\frac{d}{dt} M(u)=\Re \int _0^T \bar{u} u_t dx =2m_0\mu+2p_0\omega,
\end{equation*}
\begin{equation*}
\frac{d}{dt} P(u)= \Im \int_0 ^T \bar{u}_x u_t dx= 2p_0\mu+ 2k_0\omega.
\end{equation*}
We want to choose $\mu_n=\mu \delta t$ and $\omega_n= \omega \delta t$ so that
\begin{equation*}
2m_0\mu_n +2p_0\omega_n=m-m_0,
\end{equation*}
\begin{equation*}
2p_0 \mu_n+ 2 k_0 \omega _n= p-p_0.
\end{equation*}
Note that by Cauchy-Schwarz 
\begin{equation*}
p_0^2=\left( \frac{1}{2} \Im \int_0^T u \bar u_x dx \right)^2 \leq \left( \frac{1}{2} \int_0^T |u|^2 dx \right) \left( \frac{1}{2} \int_0^T |u_x|^2 dx \right) = m_0k_0.
\end{equation*}
In the case $m_0k_0-p_0^2>0$, we can solve 
\begin{equation*}
\mu_n=\frac{k_0(m-m_0)-p_0(p-p_0)}{2(m_0k_0-p_0^2)},
\end{equation*}
\begin{equation*}
\omega_n=\frac{m_0(p-p_0)-p_0(m-m_0)}{2(m_0k_0-p_0^2)}.
\end{equation*}
In the case $m_0k_0-p_0^2=0$, we are in the case of equality of the Cauchy-Schwarz inequality. Therefore, $\tilde{u}_{n+1}$ is a plane wave and for some real constants $r, \alpha$ we have
\begin{equation*}
\tilde{u}_{n+1}=r e^{i \alpha x}.
\end{equation*}
In that case, we choose
\begin{equation*}
u_{n+1}= \sqrt{\frac{2 m}{T}} e^{-i\frac{p}{m}x}.
\end{equation*}

\subsection{Discretization}
\label{sec:discretization}

Several numerical methods can be considered for discretizing~\eqref{eq:gradient-flow}, for example a standard Crank-Nicolson scheme. We chose a semi-implicit Euler scheme similar to the one used in \cite{BaDu04}. The main novelty of our approach is in the normalization step which completes the scheme. The (time) discretized scheme that we use is the following:
\begin{equation*}
\left \{
\begin{aligned}
\frac{{\tilde{u}}_{n+1}-u_n}{\delta_t} &= \partial_{xx} {\tilde{u}}_{n+1} + b|u_n|^2{\tilde{u}}_{n+1}, \\ 
\frac{u_{n+1}-{\tilde{u}}_{n+1}}{\delta_t}&= (\mu_n+i \omega_n \partial _x)u_{n+1},
\end{aligned}
\right.
\end{equation*}
where $\mu_n$ and $\omega_n$ are defined in Section~\ref{sec:gradient-flow}.
Note that, due to our choice of a semi-implicit scheme, the first equation is linear in ${\tilde{u}}_{n+1}$
and can be easily solved. The same holds for the second equation.   
Finally we present the fully discretized problem. We discretize the space interval $[0,T]$ by setting
\begin{equation*}
x^0=0, \quad x^l=x^0+l \delta x, \quad \delta x= \frac{T}{L}, \quad L \in \N.
\end{equation*}
We denote by $u_n^l$ the numerical approximation of $u(t_n,x^l)$. Using the (Backward Euler) semi-implicit scheme for time discretization  and second-order centered finite difference for the second spatial derivatives, and the second order centered finite difference for the first spatial derivatives, we obtain the following scheme:
\begin{equation*}
\left \{
\begin{aligned}
\frac{{\tilde{u}}_{n+1}^l-u_n^l}{\delta_t}=& \frac{{\tilde{u}}_{n+1}^{l-1}-2{\tilde{u}}_{n+1}^l+{\tilde{u}}_{n+1}^{l+1}}{\delta x^2} + b|u_n^l|^2{\tilde{u}}_{n+1}^l, \\
\frac{u_{n+1}^l-{\tilde{u}}_{n+1}^l}{\delta_t}&= \mu u_{n+1}^l+ i \omega \frac{u_{n+1}^{l-1}-u_{n+1}^{l+1}}{2 \delta x}.
\end{aligned}
\right.
\end{equation*}

Recall that as we are in the space $H^{\theta}_T$, we know that $u(x+T)=e^{i \theta} u(x)$, therefore with the discretization on the space interval $[0,T]$, we have
\begin{equation*}
u_{n+1}^L= e^{i \theta} u_{n+1}^0
\end{equation*}

\begin{remark}
Since the problem is invariant under phase multiplication and spatial translation, at each step we shift $u_j$ so that the minimum of its modulus is at the boundary, and is real. We do the same for the numerical solutions of the ordinary differential equation. 
\end{remark}

\subsection{Experiments} \label{subsec:experiments}
Our main observation is the following.
Let $(J,E) \in D_1$ ($D_2$, $D_3$ respectively) for each defocusing and focusing case. Let $u_{ode}$ be the associated solution of the ordinary differential equation~\eqref{eq:ode}. Consider the minimization problem
\begin{equation}
    \label{eq:min-prob-ode}
\min \{ \mathcal{E}(u): u \in H^{\theta}_T,\, M(u)=M(u_{ode}),\, P(u)=P(u_{ode}), u \in H_T^{\theta} \}.
\end{equation}
The minimizer of~\eqref{eq:min-prob-ode} obtained with the algorithm described in Section~\ref{sec:discretization} is given up to admissible numerical errors by $u_{ode}$.

We performed different tests using the scheme  above described. In every test we are comparing the minimizer that we obtained from the normalized gradient flow with the numerical solution of the ordinary differential equation.
The choice of the initial data was arbitrary and equal to $ u_0(x)=1+ i+ \cos \left(\frac{2 \pi x}{T} \right)$. Other initial data lead to similar results. We use $L=1000$ grid points for the interval $[0,T]$. We run the algorithm until a maximal difference of $\epsilon=10^{-6}$ between the absolute values of the moduli of $u_j^l$. For the time discretization we take $\delta_t=10^{-3}$.

Minimization among periodic functions is completely covered by theoretical results (see Section~\ref{sec:variational-problems}).
Therefore we started the experiments by verifying on these cases that our algorithm recovers the expected theoretical results. We have chosen to fix $k=0.9$. The others parameters will be chosen so as to obtain $\cn$, $\dn$ and $\sn$ (recall that to run the algorithm we have to fix $a,b,J$ and $E$). For all three cases we have $J=0$.

We start with the $\dn$ case, which corresponds to
\begin{equation*}
E=-0.095,\quad b=2, \quad a=-(2-k^2).
\end{equation*}
As mentioned in Section~\ref{sec:ode}, after fixing $J$ and $E$ we can now find the period $T$, the Floquet  multiplier $\theta$, the mass $m$ and the momentum $p$. We found the following quantities:
\begin{equation*}
T=2 \mathbf{K}(k), \quad  \theta =0, \quad m=\mathbf{E}(k), \quad p=0,
\end{equation*}
which are the period, mass and momentum of the dnoidal function $\dn$.
We start by plotting $u_{ode}$ the numerical solution of the ordinary differential equation~\eqref{eq:ode} and the function $\dn(x,k)$. We can see on the left of Figure~\ref{fig:ode-dn} that $u_{ode}$ is very close to the exact solution $\dn$. 
The second step is to run the algorithm and find the minimizer $u_{min}$ which we compare to the solution of the ODE $u_{ode}$. We observe convergence towards dnoidal functions as we see on the right of Figure~\ref{fig:ode-dn} with a maximum difference between the solutions of $3 \times 10^{-3}$.
\begin{figure}[htpb!]
\centering
\includegraphics[width=.49\textwidth]{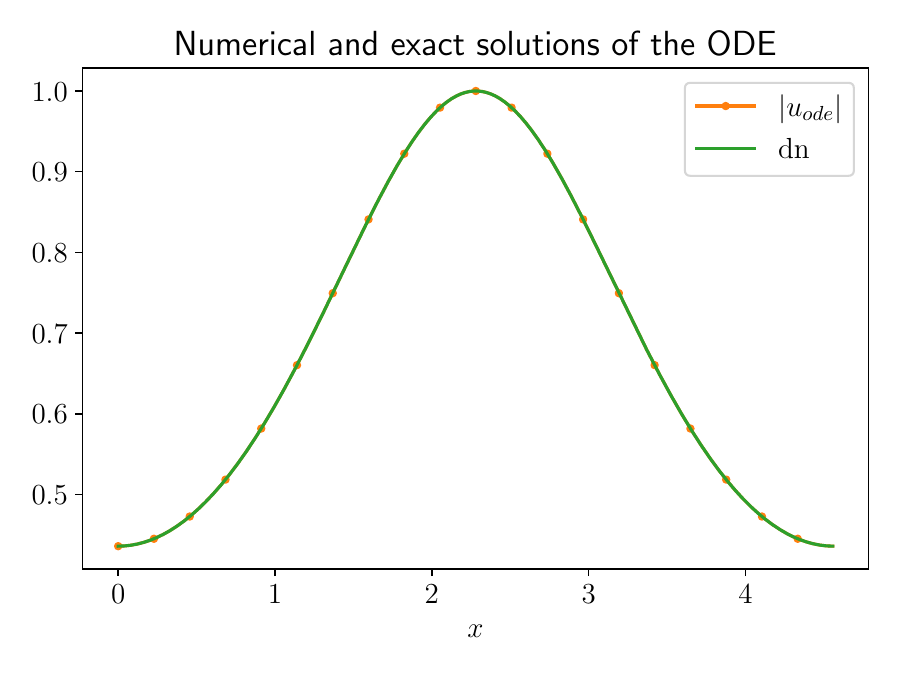}~
\includegraphics[width=.49\textwidth]{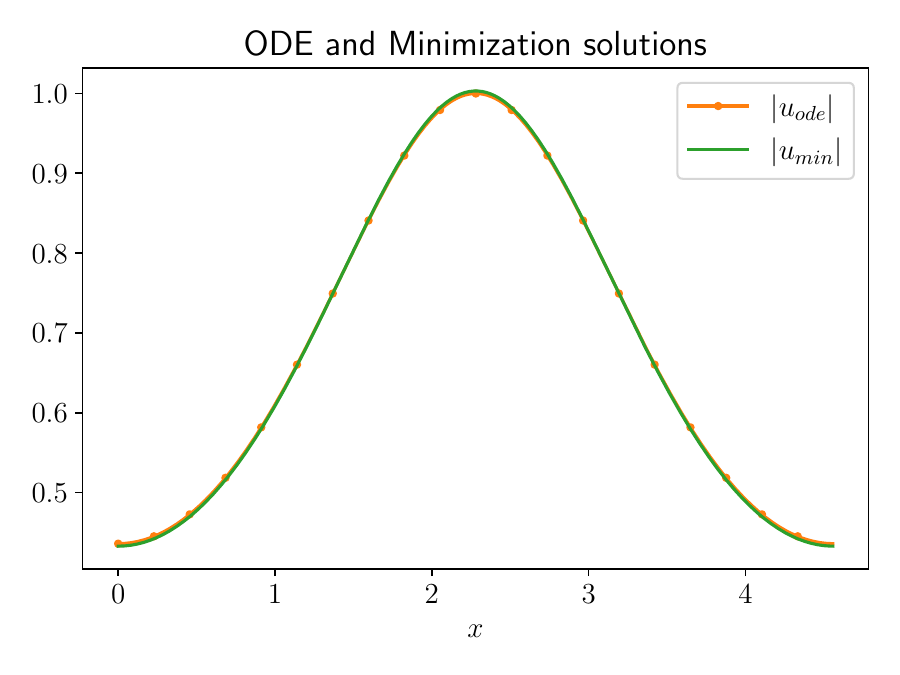}~
\caption{Comparison of the $\dn$ function and the solutions of the ODE and the minimization problem.}
\label{fig:ode-dn}
\end{figure} 

The second case is the case of $\cn$. We fix 
\begin{equation*}
E=0.095, \quad b=2k^2,\quad a=1-2k^2.
\end{equation*}
As before we find the period $T$, the  mass  $m$ and the momentum $p$, and the Floquet  multiplier $\theta$:
\begin{equation*}
T=2\mathbf{K}(k), \quad \theta=\pi, \quad m=\frac{\mathbf{E}-(1-k^2)\mathbf{K}}{k^2}, \quad  p=0.
\end{equation*}
We start by comparing the numerical solution of the ODE with the exact solution. We can see on the left of Figure~\ref{fig:ode-cn} that the numerical solution of the ODE is $\cn$ up to numerical errors.
Then we compare it to the solution  of the minimization  problem. We observe convergence towards cnoidal functions as we see on the right of Figure~\ref{fig:ode-cn} with a maximum difference between the solutions of $3 \times 10^{-4}$.
\begin{figure}[htpb!]
\centering
\includegraphics[width=.45\textwidth]{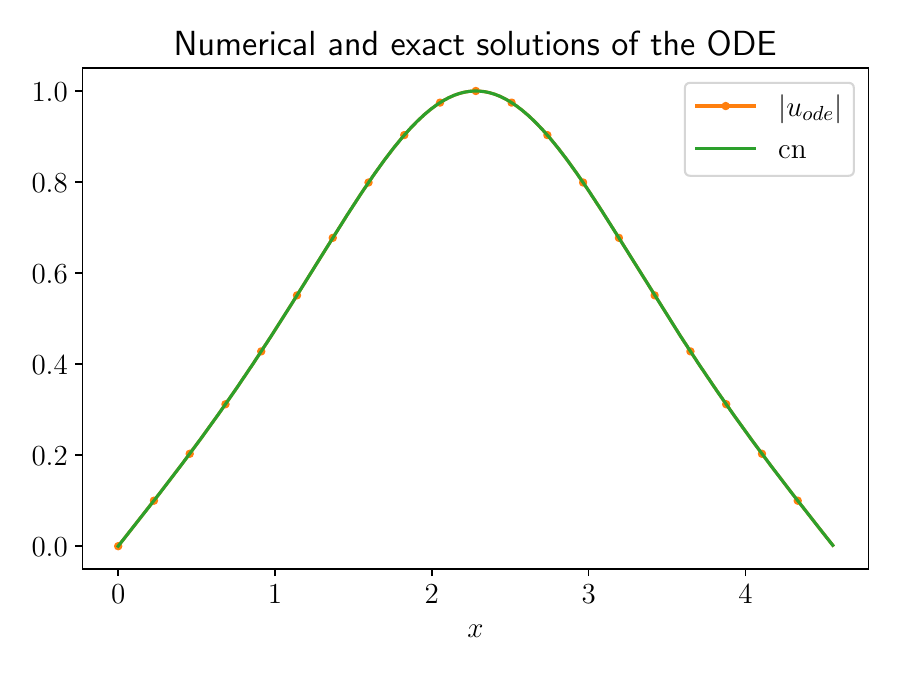}~
\includegraphics[width=.45\textwidth]{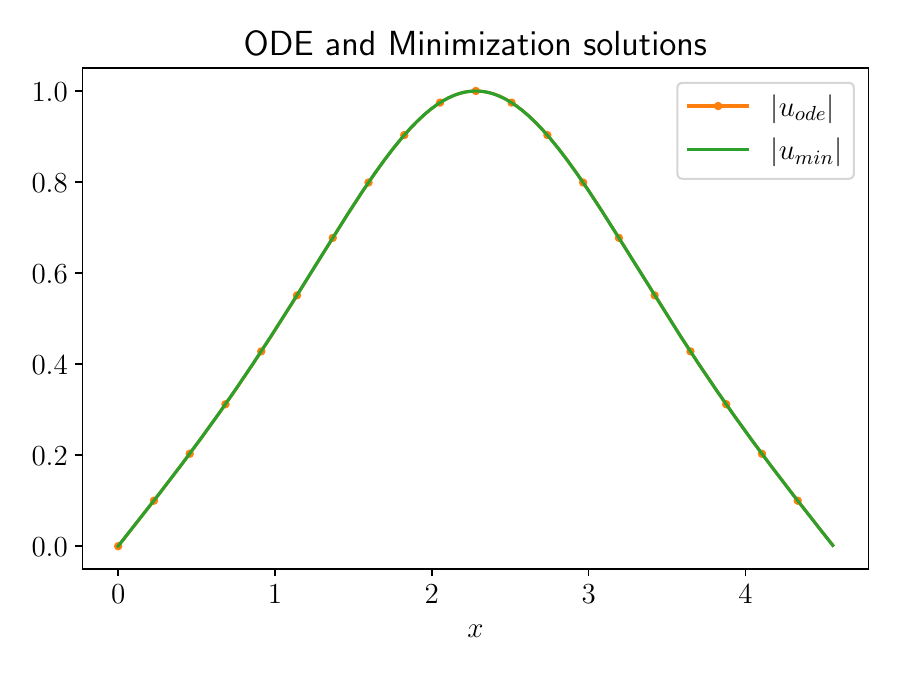}~
\caption{Comparison of the $\cn$ function and the solutions of the ODE and the minimization problem.}
\label{fig:ode-cn}
\end{figure} 

For the third case, let 
\begin{equation*}
E=0.5, \quad b=-2k^2,\quad a=1+k^2.
\end{equation*}
With the same method as before we have
\begin{equation*}
T=2\mathbf{K}(k), \quad \theta=\pi, \quad m=\frac{\mathbf{K}-\mathbf{E}}{k^2}, \quad  p=0.
\end{equation*}
First we can see on the left of Figure of~\ref{fig:ode-sn} that the numerical solution of the ODE is the exact solution $\sn$. 
Then we compare it to the solution  of the minimization  problem. We observe convergence towards snoidal functions as we  see on the right of Figure~\ref{fig:ode-sn} with a maximum difference between the solutions of $1 \times 10^{-2}$. This case is not covered by the theoretical result. Our observation here is similar to the one of \cite{GuLeTs17}: snoidal functions minimize the energy on fixed mass and $0$ momentum among anti-periodic functions.

Now we will test the conjecture on the other cases where we do not know the theoretical results, i.e.~for complex valued solutions of~\eqref{eq:ode}. To do so we fix different values of $J$ and for each of these values, we choose $E$ such that $(J,E) \in D_1$, ($(D_2),(D_3)$) defined in~\eqref{eq:domain-D-defocusing-case},~\eqref{eq:domain-D-focusing-case-a=1},~\eqref{eq:domain-D-focusing-case-a=-1}. We plot the numerical solution of the ODE and the solution of the minimization problem and we do the comparison.

We start with the defocusing case, and we fix $b=-1$ and $a=1$. We choose $J=0.2$ arbitrarily. For other values of $(J,E) \in D_1$ that we tested, we obtain the same result. Then we fix three values of $E$ such that: the first one corresponds to $E=E_{-}(J)=V_J(r_Q)$, where we know that the solution is a plane wave, the second is strictly between $E_{-}(J)$ and $E_{+}(J)$ and the third is very close to $E_{+}(J)$.
For each value of $E$ we plot the numerical solution of the ODE and the solution of the minimization problem. As we can see in Figure~\ref{fig:ode-min-def-J=0.1}, we obtain a very good agreement with the conjecture. For the first value of $E$ we have a maximum difference between the solutions of $5 \times 10^{-8}$, for the second value of $E$ a  maximum difference of $1 \times 10^{-2}$ and for the third value of $E$ a maximum difference of $3 \times 10^{-2}$.

\begin{figure}[htpb!]
\centering
\includegraphics[width=.45\textwidth]{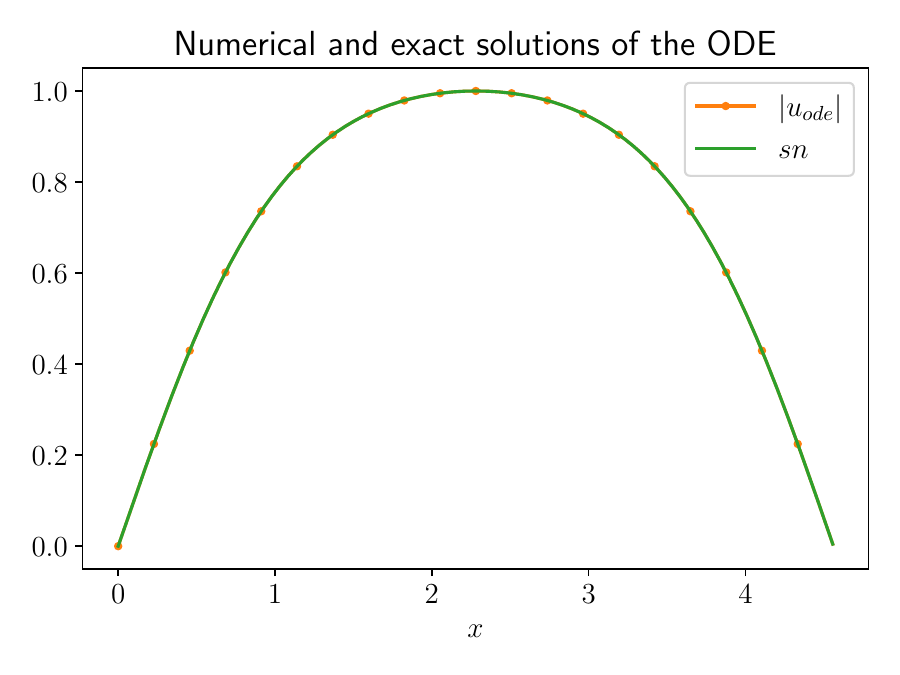}~
\includegraphics[width=.45\textwidth]{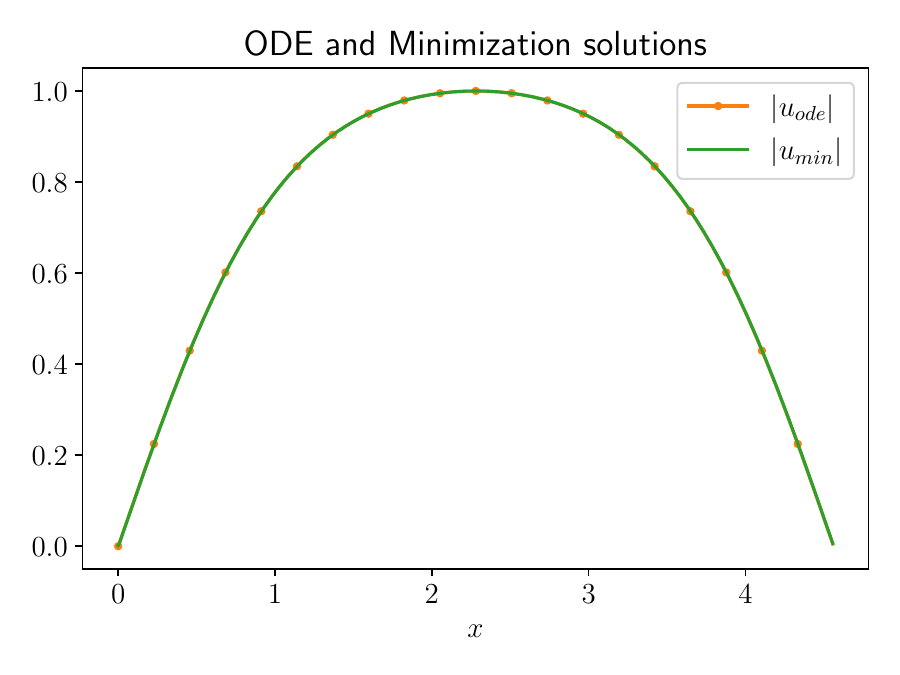}~
\caption{Comparison of the $\sn$ function and the solutions of the ODE and the minimization problem.}
\label{fig:ode-sn}
\end{figure}

\begin{figure}[htpb!]
\centering
\includegraphics[width=.33\textwidth]{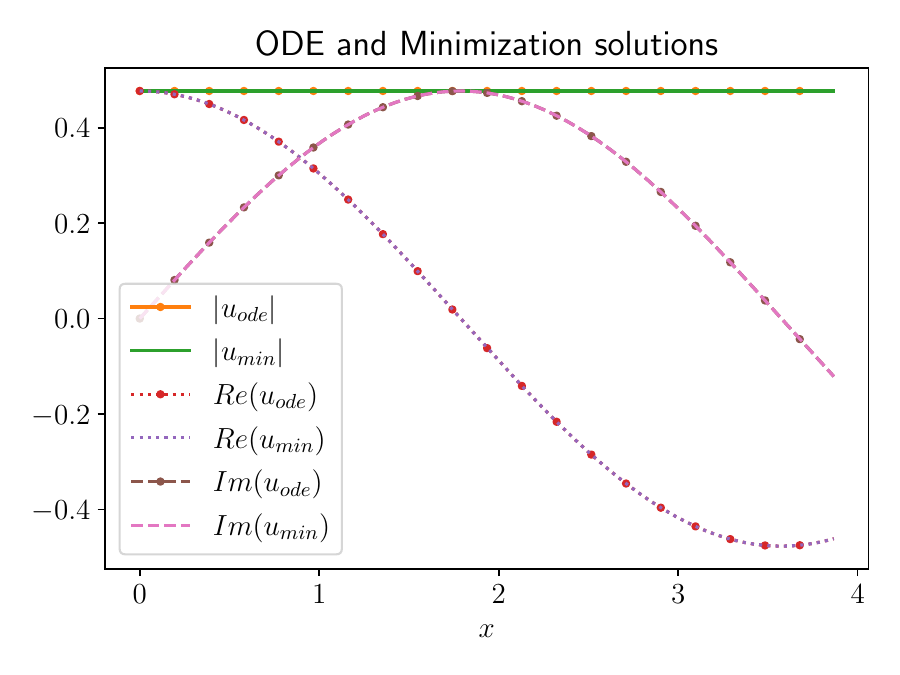}~
\includegraphics[width=.33\textwidth]{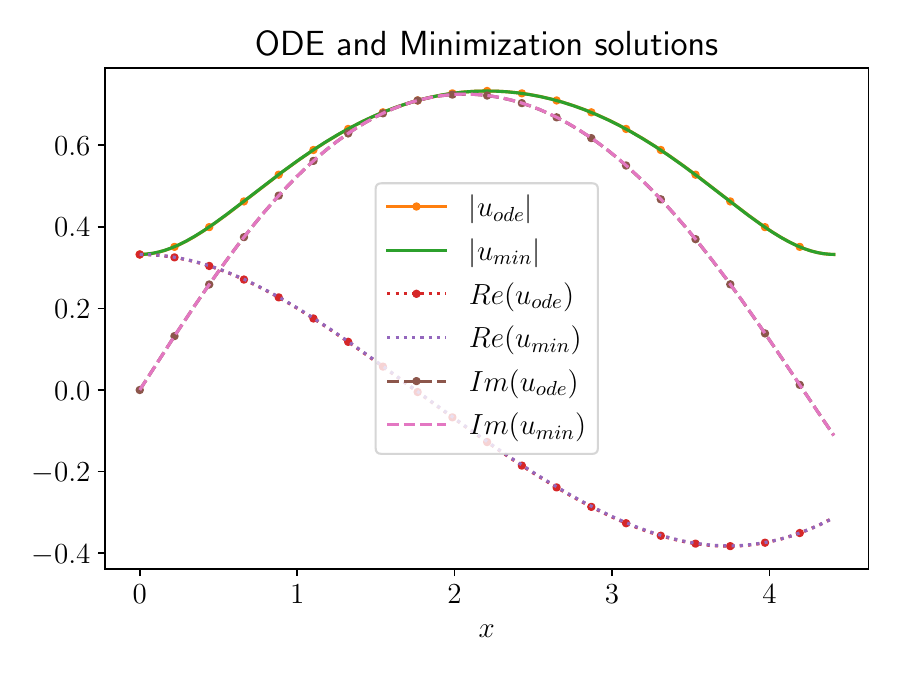}~
\includegraphics[width=.33\textwidth]{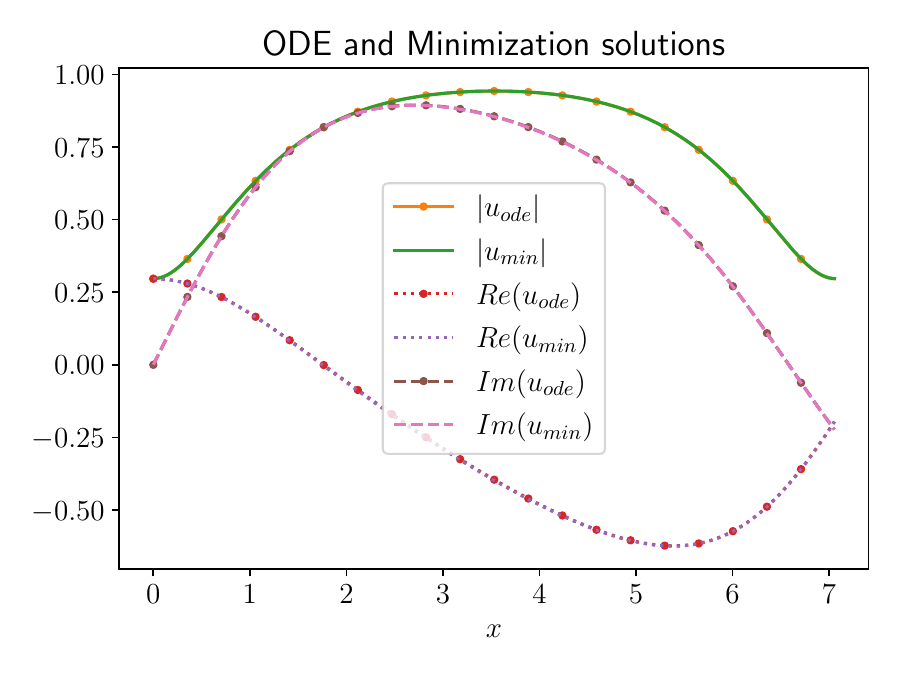}
\caption{Comparison  between $u_{min}$ and $u_{ode}$ for the defocusing case.}
\label{fig:ode-min-def-J=0.1}
\end{figure}

We do the same for the focusing case, with positive $a$. We fix $b=1$ and $a=1$ and an arbitrary $J=1$. We choose two values for $E$. The first one $E=E_{-}(J)=V_J(r_Q)$ and the second such that $E_{-}(J)<E=5$. We plot the solutions and we can see in Figure~\ref{fig:ode-min-foc-a=1-J=1} that the solution of the ODE is the minimizer with a maximum difference of $1.2 \times 10^{-7}$ for the first value of $E$ and $1 \times 10^{-2}$ in the second. 

\begin{figure}[htpb!]
\centering
\includegraphics[width=.5\textwidth]{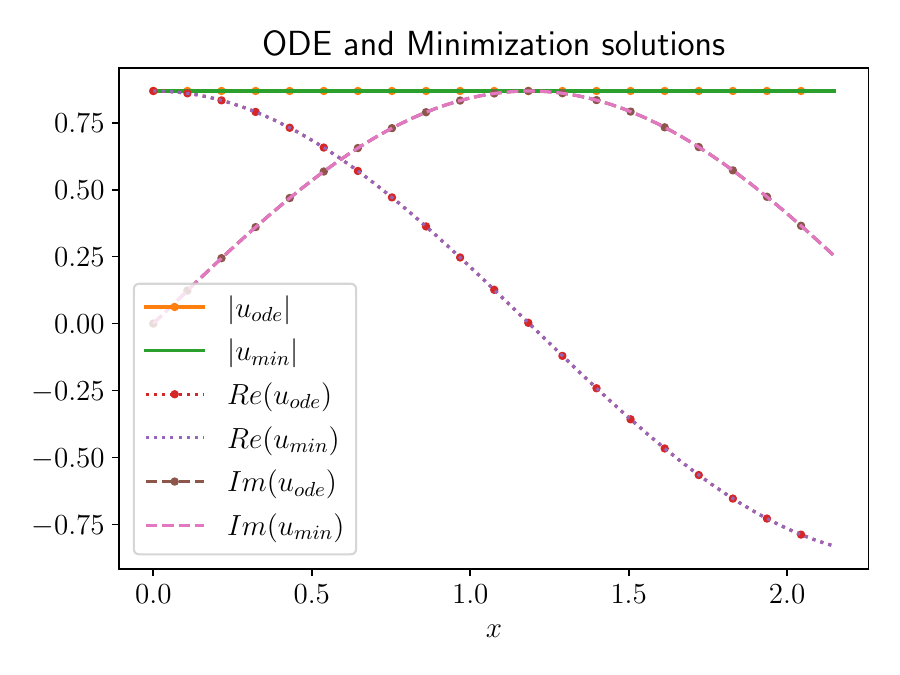}~
\includegraphics[width=.5\textwidth]{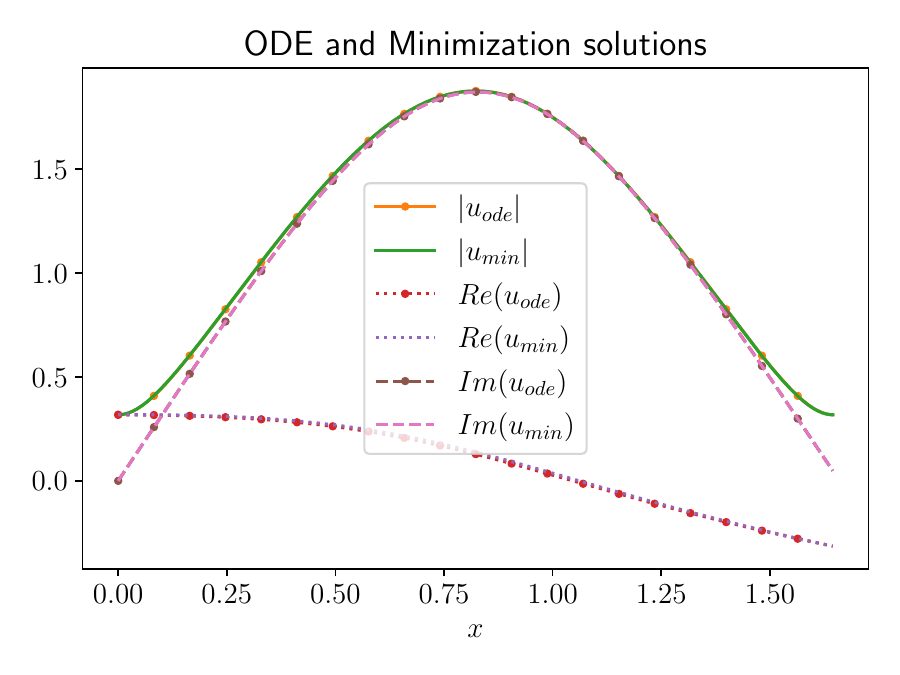}~
\caption{Comparison  between $u_{min}$ and $u_{ode}$ for the focusing case with $a=1$.}
\label{fig:ode-min-foc-a=1-J=1}
\end{figure}

Finally the focusing case with negative $a$. We fix $b=1$ and $a=-1$ and an arbitrary $J=4$. We choose two values for $E$. The first one $E=E_{-}(J)=V_J(r_Q)$ and the second $E_{-}(J)<E=7$. We plot the solutions and we can see in Figure~\ref{fig:ode-min-foc-a=-1-J=4} that the solution of the ODE is the minimizer. For the first value of $E$ we have a maximum difference between the solutions of $6 \times 10^{-8}$, for the second value of $E$ a maximum difference of $2 \times 10^{-3}$.

\begin{figure}[htpb!]
\centering
\includegraphics[width=.5\textwidth]{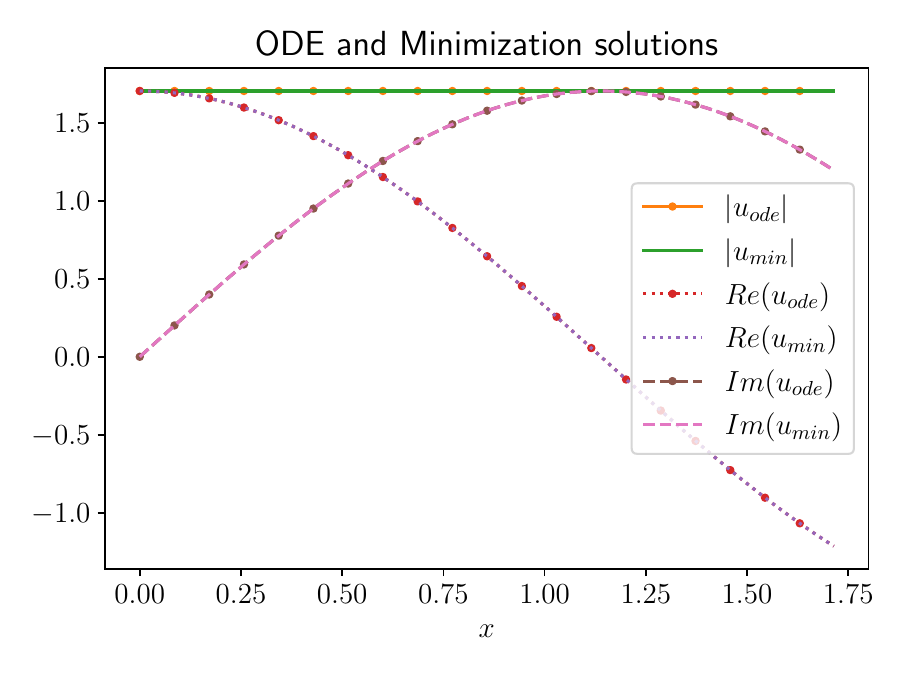}~
\includegraphics[width=.5\textwidth]{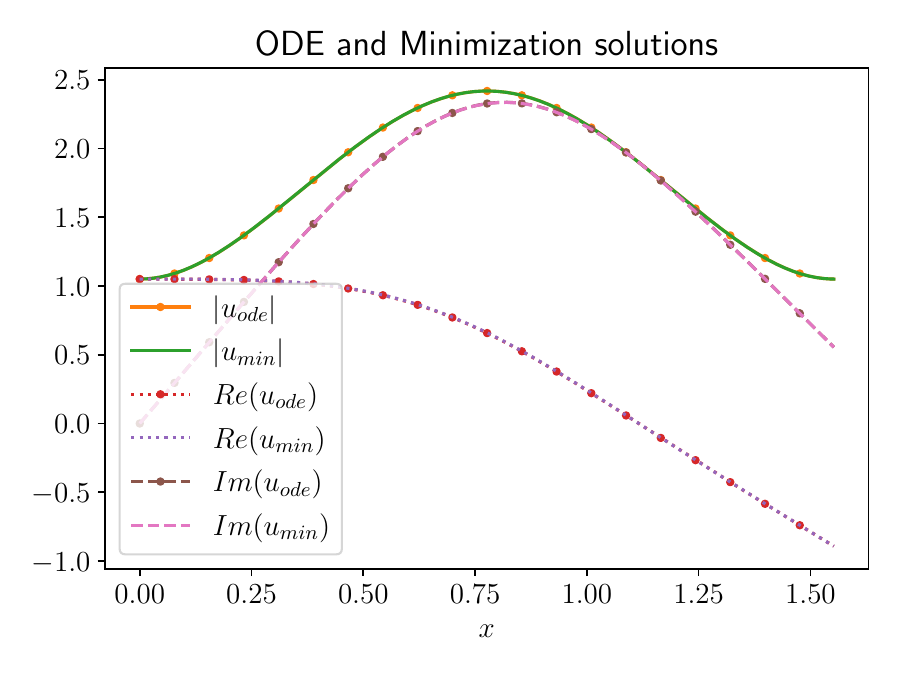}~
\caption{Comparison  between $u_{min}$ and $u_{ode}$ for the focusing case with $a=-1$.}
\label{fig:ode-min-foc-a=-1-J=4}
\end{figure}

\bibliographystyle{abbrv}
\bibliography{bibliography}

\end{document}